\numberwithin{equation}{section}
     \newtheorem{thm}{Theorem}[section]
     \newtheorem{cor}[thm]{Corollary}
     \newtheorem{lem}[thm]{Lemma}
     \newtheorem{defn}[thm]{Definition}
      \newtheorem{rem}[thm]{Remark}
\newcommand{\R}{\mathbb{R}}
\newcommand{\cY}{\mathcal{Y}}
\newcommand{\supp}{\operatorname{supp}}
\newcommand{\loc}{\mathrm{loc}}
\newcommand{\weak}{\mathrm{weak}}
\newcommand{\ls}{\lesssim}
\newcommand{\gs}{\gtrsim}
\newcommand{\trho}{\tilde\rho}
\newcommand{\vp}{\varphi}
\newcommand{\tvp}{\tilde\varphi}
\newcommand{\cPhi}{\widetilde\Phi}
\newcommand{\cGdec}{\mathcal{G}^{\rm dec}}
\newcommand{\LP}{L^{\Phi}}
\newcommand{\LPs}{L^{\Psi}}
\newcommand{\LcP}{L^{\widetilde\Phi}}
\newcommand{\wL}{\mathrm{w}\hskip-0.6pt{L}}
\newcommand{\wLP}{\mathrm{w}\hskip-0.6pt{L}^{\Phi}}
\newcommand{\wLPs}{\mathrm{w}\hskip-0.6pt{L}^{\Psi}}
\newcommand{\LPp}{L^{(\Phi,\vp)}}
\newcommand{\wLPp}{\mathrm{w}\hskip-0.6pt{L}^{(\Phi,\vp)}}
\newcommand{\LPsp}{L^{(\Psi,\vp)}}
\newcommand{\wLPsp}{\mathrm{w}\hskip-0.6pt{L}^{(\Psi,\vp)}}
\newcommand{\LPsps}{L^{(\Psi,\psi)}}
\newcommand{\wLPsps}{\mathrm{w}\hskip-0.6pt{L}^{(\Psi,\psi)}}
\newcommand{\Ir}{I_{\rho}}
\newcommand{\Ia}{I_{\alpha}}
\newcommand{\Ma}{M_{\alpha}}
\newcommand{\Mr}{M_{\rho}}
\newcommand{\biP}{{\it{\overline\varPhi}}}
\newcommand{\iPy}{{\it{\Phi_Y}}}
\newcommand{\biPy}{{\it{\overline\Phi_Y}}}
\newcommand{\dtwo}{\Delta_2}
\newcommand{\ntwo}{\nabla_2}
\newcommand{\bdtwo}{\overline\Delta_2}
\newcommand{\bntwo}{\overline\nabla_2}
\newcommand{\dlim}{\displaystyle\lim}
\newcommand{\msckw}{%
\footnotetext{\hspace{-0.35cm} 2010 {\it Mathematics Subject Classification}. 
42B35, 46E30, 42B20, 42B25.
\endgraf{\it Key words and phrases.} 
Fractional integral, fractional maximal operator, Orlicz-Morrey space, weak Orlicz-Morrey space, modular inequality. 
% $^*$Corresponding author.
}
}
\begin{document}

\title{%
Characterization of the boundedness of 
generalized fractional integral and maximal operators
on Orlicz-Morrey and weak Orlicz-Morrey spaces
\msckw
}
\author{%
Ryota Kawasumi, Eiichi Nakai and Minglei Shi}
\date{}
%\footnotetext{$^*$Corresponding author. }

\maketitle

\begin{abstract}
We give necessary and sufficient conditions 
for the boundedness of 
generalized fractional integral and maximal operators
on Orlicz-Morrey and weak Orlicz-Morrey spaces.
To do this we prove the weak-weak type modular inequality of the Hardy-Littlewood maximal operator
with respect to the Young function.
Orlicz-Morrey spaces contain $L^p$ spaces ($1\le p\le\infty$), Orlicz spaces 
and generalized Morrey spaces as special cases.
Hence we get necessary and sufficient conditions 
on these function spaces
as corollaries.
\end{abstract}
%% maketitle must follow the abstract.
\maketitle                   % Produces the title.

%% If there is not enough space inside the running head
%% for all authors including the title you may provide
%% the leftmark in one of the following three forms:

%% \renewcommand{\leftmark}
%% {First Author: A Short Title}

%% \renewcommand{\leftmark}
%% {First Author and Second Author: A Short Title}

%% \renewcommand{\leftmark}
%% {First Author et al.: A Short Title}

%% \tableofcontents  % Produces the table of contents.

%%%===================================================================
%%%===================================================================
%%%===================================================================

%%%===================================================================
%%%===================================================================
\section{Introduction}\label{sec:intro}
%%%===================================================================
%%%===================================================================

In this paper 
we consider the generalized fractional integral operator $\Ir$ 
and the generalized fractional maximal operator $\Mr$.
We give necessary and sufficient conditions 
for the boundedness of $\Ir$ and $\Mr$
on Orlicz-Morrey spaces $\LPp(\R^n)$ and weak Orlicz-Morrey spaces $\wLPp(\R^n)$.
More precisely, we investigate the following boundedness of the operator $T=\Ir$ or $\Mr$:
\begin{align*}
 \|Tf\|_{\wLPsp}&\le C\|f\|_{\LPp}, \\
 \|Tf\|_{\LPsp}&\le C\|f\|_{\LPp}, \\
 \|Tf\|_{\wLPsp}&\le C\|f\|_{\wLPp}.
\end{align*}
We treat a wide class of Young functions as $\Phi:[0,\infty]\to[0,\infty]$.
Orlicz-Morrey spaces contain $L^p$ spaces ($1\le p\le\infty$), Orlicz spaces 
and generalized Morrey spaces as special cases.
Hence we get necessary and sufficient conditions 
for the boundedness of $\Ir$ and $\Mr$ on these function spaces
as corollaries.

%%%-------------------------------------------------------------------
Now, we recall the definitions of the Orlicz-Morrey and weak Orlicz-Morrey spaces.
For a measurable set $G \subset \R^n$, 
we denote by $|G|$ its Lebesgue measure.
We denote by $B(a,r)$ 
the open ball centered at $a\in\R^n$ and of radius $r$.
For a function $f\in L^1_{\loc}(\R^n)$ and a ball $B$, 
let 
\begin{equation*}\label{mean}
  f_{B}=\fint_{B} f=\fint_{B} f(y)\,dy=\frac1{|B|}\int_{B} f(y)\,dy.
\end{equation*}
For a measurable set $G\subset\R^n$, 
a measurable function $f$ and $t\ge0$, 
let 
\begin{equation*} 
 m(G,f,t)=|\{x\in G: |f(x)|>t\}|.
\end{equation*}
In the case $G=\R^n$, we briefly denote it by $m(f,t)$.

%%%--------------------------------------------------
%%%=======================
\begin{defn}\label{defn:OM}
%%%=======================
For a Young function $\Phi:[0,\infty]\to[0,\infty]$,
a function $\vp:(0,\infty)\to(0,\infty)$
and a ball $B=B(a,r)$,
let 
\begin{align*}
     \|f\|_{\Phi,\vp,B}
     &= 
     \inf\left\{ \lambda>0: 
     \frac{1}{\vp(r)}
       \fint_B \!\Phi\!\left(\frac{|f(x)|}{\lambda}\right)\! dx \le 1
         \right\},
\\
     \|f\|_{\Phi,\vp,B,\weak}
     &= 
     \inf\left\{ \lambda>0: 
     \sup_{t\in(0,\infty)} 
      \frac{\Phi(t)\, m(B,f/\lambda,t)}{|B|\vp(r)}
       \le 1
         \right\}.
\end{align*}
Let $\LPp(\R^n)$ and $\wLPp(\R^n)$ 
be the set of all measurable functions $f$ on $\R^n$
such that the following functionals are finite, respectively: 
\begin{align*}
     \|f\|_{\LPp} 
        &= \sup_{B}  \|f\|_{\Phi,\vp,B}, 
\\
     \|f\|_{\wLPp} 
        &= \sup_{B}  \|f\|_{\Phi,\vp,B,\weak},
\end{align*}
where the suprema are taken over all balls $B$ in $\R^n$.
(For the definition of the Young function, see Definition~\ref{defn:Young} below.)
\end{defn}

Then $\|\cdot\|_{\LPp(\R^n)}$ is a norm 
and thereby $\LPp(\R^n)$ is a Banach space.
On the other hand $\|\cdot\|_{\wLPp(\R^n)}$ is a quasi norm with
the inequality
\begin{equation*}
\|f+g\|_{\wLPp}\le2\left(\|f\|_{\wLPp}+\|g\|_{\wLPp}\right),
\end{equation*}
and thereby $\wLPp(\R^n)$ is a quasi Banach space.

If $\vp(r)=1/r^n$, then $\LPp(\R^n)=\LP(\R^n)$ 
and $\wLPp(\R^n)=\wLP(\R^n)$,
which are the usual Orlicz and weak Orlicz spaces, respectively.
In this paper we treat a wide class of Young functions such as
\begin{equation}\label{Young}
 \Phi_1(t)=
\begin{cases}
 t, & 0\le t\le1, \\
 \infty, & t>1,
\end{cases}
\qquad
\Phi_2(t)=\max(0,t^2-4), \quad t \ge 0.
\end{equation}
We also treat generalized Young functions such as
\begin{equation}\label{gYoung}
 \Phi_3(t)=
\begin{cases}
 e^{1-1/t^p}, & 0\le t\le1, \\
 e^{t^p-1}, & t>1,
\end{cases}
\quad 0<p<\infty,
\end{equation}
which is not convex near $t=1$.

If $\Phi(t)=t^p$ $(1\le p<\infty)$, 
then we denote 
$\LPp(\R^n)$ and $\wLPp(\R^n)$
by 
$L^{(p,\vp)}(\R^n)$ and $\wL^{(p,\vp)}(\R^n)$,
which are the generalized Morrey and weak Morrey spaces,
respectively.
If $\vp_{\lambda}(r)=r^{\lambda}$, $-n\le\lambda<0$,
then $L^{(p,\vp_{\lambda})}(\R^n)$ is the classical Morrey space.
In particular, if $\lambda=-n$, then it is the Lebesgue space $L^p(\R^n)$.

Orlicz spaces were introduced by \cite{Orlicz1932,Orlicz1936}.
For the theory of Orlicz spaces,
see 
\cite{Kita2009,Kokilashvili-Krbec1991,Krasnoselsky-Rutitsky1961,Maligranda1989,Rao-Ren1991}
for example.
Weak Orlicz spaces were studied in
\cite{Jiao-Peng-Liu2008,Liu-Wang2013,Nakai2002Lund}, etc.
Morrey spaces were introduced by \cite{Morrey1938}.
For their generalization, 
see \cite{Mizuhara1990,Nakai1994MathNachr,Peetre1966,Peetre1969}, etc.
Weak Morrey spaces were studied in
\cite{Gunawan-Hakim-Limanta-Masta2017,Gunawan-Hakim-Nakai-Sawano2018,Sawano-ElShabrawy2018,Sihwaningrum-Sawano2013,Tumalun-Gunawan2019}, etc.
The Orlicz-Morrey space $\LPp(\R^n)$ was first studied in \cite{Nakai2004KIT}.
The spaces $\LPp(\R^n)$ and $\wLPp(\R^n)$ 
were investigated 
in \cite{Ho2013,Nakai2008Studia,Nakai2008KIT,Sawano2019}, etc.
For other kinds of Orlicz-Morrey spaces, see 
\cite{Deringoz-Guliyev-Nakai-Sawano-Shi2019Posi,Deringoz-Guliyev-Samko2014,Gala-Sawano-Tanaka2015,Guliyev-Hasanov-Sawano-Noi2016,Sawano-Sugano-Tanaka2012}, etc.
For the study related to weak Orlicz and weak Morrey spaces,
see \cite{Ferreira2016,Gogatishvili-Mustafayev-Agcayazi2018,Hakim-Sawano2016,Hatano2019preprint,Ho2017,Ho2019,Li2012,Liang-Yang-Jiang2016,Xie-Yang2019}, etc.

%%%-------------------------------------------------------------------
Next we recall 
the generalized fractional integral operator $\Ir$.
For a function $\rho:(0,\infty)\to(0,\infty)$,
the operator $\Ir$ is defined by
\begin{equation}\label{Ir}
 \Ir f(x)=\int_{\R^n}\frac{\rho(|x-y|)}{|x-y|^n}f(y)\,dy,
 \quad x\in\R^n,
\end{equation}
where we always assume that
\begin{equation}\label{int rho}
 \int_0^1\frac{\rho(t)}{t}\,dt<\infty.
\end{equation}
If $\rho(r)=r^{\alpha}$, $0<\alpha<n$, 
then $\Ir$ is the usual fractional integral operator $\Ia$.
The condition \eqref{int rho} is needed for the integral in \eqref{Ir} 
to converge for bounded functions $f$ with compact support.
In this paper we also assume that 
there exist positive constants $C$, $K_1$ and $K_2$ with $K_1<K_2$ such that, for all $r>0$,
\begin{equation}\label{sup rho}
 \sup_{r\le t\le 2r}\rho(t)
 \le
 C\int_{K_1r}^{K_2r}\frac{\rho(t)}{t}\,dt.
% \quad\text{for all $r>0$}.
\end{equation}
The condition \eqref{sup rho} was considered in \cite{Perez1994}.
If $\rho$ satisfies the doubling condition \eqref{doubling} below,
then $\rho$ satisfies \eqref{sup rho}.
Let $\rho(r)=\min(r^{\alpha},e^{-r/2})$ with $0<\alpha<n$, 
which controls the Bessel potential (see \cite{Stein1970}).
Then $\rho$ also satisfies \eqref{sup rho}.
The operator $\Ir$ was introduced in \cite{Nakai2001Taiwan} 
to extend the Hardy-Littlewood-Sobolev theorem to Orlicz spaces
whose partial results were announced in 
\cite{Nakai2000ISAAC}.
For example, 
the generalized fractional integral $I_{\rho}$ 
is bounded from $\exp L^p(\R^n)$ to $\exp L^q(\R^n)$,
where 
\begin{equation}\label{rho log}
     \rho(r)=
     \begin{cases}
          1/(\log(1/r))^{\alpha+1} & \text{for small}\; r, \\
          (\log r)^{\alpha-1}     & \text{for large}\; r,
     \end{cases}
     \quad \alpha>0,
\end{equation}
$p,q\in(0,\infty)$, $-1/p+\alpha=-1/q$ 
and $\exp L^p(\R^n)$ is the Orlicz space $L^{\Phi}(\R^n)$ with 
\begin{equation}\label{Phi exp}
     \Phi(t)=
     \begin{cases}
          1/\exp(1/t^p) & \text{for small}\; t, \\
          \exp(t^p)     & \text{for large}\; t.
     \end{cases}
\end{equation}
See also 
\cite{Nakai2001SCMJ,Nakai2002Lund,Nakai2004KIT,Nakai2008Studia,Nakai-Sumitomo2001SCMJ}.

%%%-------------------------------------------------------------------
We also consider the generalized fractional maximal operator $\Mr$
and compare its boundedness with $\Ir$.
For a function $\rho:(0,\infty)\to(0,\infty)$, 
the operator $\Mr$ is defined by
\begin{equation}\label{Mr}
 \Mr f(x)=\sup_{B(a,r)\ni x}\rho(r)\fint_{B(a,r)}|f(y)|\,dy,
 \quad x\in\R^n,
\end{equation}
where the supremum is taken over all balls $B(a,r)$ containing $x$.
We need neither the condition \eqref{int rho} nor \eqref{sup rho} on the definition of $\Mr$.
The operator $\Mr$ was studied in \cite{Sawano-Sugano-Tanaka2011} on generalized Morrey spaces.
See also \cite{Ho2016}.
If $\rho(r)=|B(0,r)|^{\alpha/n}$, 
then $M_{\rho}$ is the usual fractional maximal operator $\Ma$.
If $\rho\equiv1$, then $\Mr$ is 
the Hardy-Littlewood maximal operator $M$.
It is known that 
the usual fractional maximal operator $\Ma$ 
is dominated pointwise by 
the fractional integral operator $\Ia$,
that is,
$\Ma f(x)\le C\Ia|f|(x)$ for all $x\in\R^n$.
Then the boundedness of $\Ma$ follows from one of $\Ia$.
However, 
we have a better estimate of $\Mr$ than $\Ir$.

%%%-------------------------------------------------------------------
To prove the boundedness of $\Ir$ and $\Mr$ on $\LPp(\R^n)$ and $\wLPp(\R^n)$
we show the pointwise estimate by the Hardy-Littlewood maximal operator $M$
and use the modular inequality of $M$ with respect to Young functions $\Phi$.
The strong and weak type modular inequalities are known. 
In this paper we show the weak-weak type modular inequality.
In general, the modular inequality is stronger than the norm inequality.
For the boundedness on generalized Morrey spaces $L^{(p,\vp)}(\R^n)$
we only need the $L^p$-norm inequality of $M$.
However, for $\LPp(\R^n)$ and $\wLPp(\R^n)$ we need the modular inequality.

The organization of this paper is as follows:
In the next section we state precise definitions of the functions $\Phi$ and $\vp$
by which we define $\LPp(\R^n)$ and $\wLPp(\R^n)$.
Then we state the main results in Section~\ref{sec:main}.
To prove them in the final section,
we give properties of Young functions, Orlicz-Morrey and weak Orlicz-Morrey spaces
in Section~\ref{sec:lemmas}.
%%%-------------------------------------------------------------------

At the end of this section, we make some conventions. 
Throughout this paper, we always use $C$ to denote a positive constant 
that is independent of the main parameters involved 
but whose value may differ from line to line.
Constants with subscripts, such as $C_p$, are dependent on the subscripts.
If $f\le Cg$, we then write $f\ls g$ or $g\gs f$; 
and if $f \ls g\ls f$, we then write $f\sim g$.

%%%===================================================================
%%%===================================================================
\section{On the functions $\Phi$ and $\vp$}\label{sec:Young}
%%%===================================================================
%%%===================================================================

In this section 
we state on the functions $\Phi$ and $\vp$ by which we define 
$\LPp(\R^n)$ and $\wLPp(\R^n)$.
We first recall the Young function and its generalization.

For an increasing (i.e. nondecreasing) function 
$\Phi:[0,\infty]\to[0,\infty]$,
let
\begin{equation}\label{aP bP} 
 a(\Phi)=\sup\{t\ge0:\Phi(t)=0\}, \quad 
 b(\Phi)=\inf\{t\ge0:\Phi(t)=\infty\},
\end{equation} 
with convention $\sup\emptyset=0$ and $\inf\emptyset=\infty$.
Then $0\le a(\Phi)\le b(\Phi)\le\infty$.

Let $\biP$ be the set of all increasing functions
$\Phi:[0,\infty]\to[0,\infty]$
such that
\begin{align}\label{ab}
 &0\le a(\Phi)<\infty, \quad 0<b(\Phi)\le\infty, \\
 &\lim_{t\to+0}\Phi(t)=\Phi(0)=0, \label{lim_0} \\
 &\text{$\Phi$ is left continuous on $[0,b(\Phi))$}, \label{left cont} \\
 &\text{if $b(\Phi)=\infty$, then } 
 \lim_{t\to\infty}\Phi(t)=\Phi(\infty)=\infty, \label{left cont infty} \\
 &\text{if $b(\Phi)<\infty$, then } 
 \lim_{t\to b(\Phi)-0}\Phi(t)=\Phi(b(\Phi)) \ (\le\infty). \label{left cont b}
\end{align}

In what follows,
if an increasing and left continuous function $\Phi:[0,\infty)\to[0,\infty)$ satisfies
\eqref{lim_0} and $\dlim_{t\to\infty}\Phi(t)=\infty$,
then we always regard that $\Phi(\infty)=\infty$ and that $\Phi\in\biP$.

%%%-----------------------------------

For $\Phi\in\biP$,
we recall the generalized inverse of $\Phi$
in the sense of O'Neil \cite[Definition~1.2]{ONeil1965}.

%%%===========================
\begin{defn}\label{defn:ginverse}
%%%===========================
For $\Phi\in\biP$ and $u\in[0,\infty]$, let
\begin{equation}\label{inverse}
 \Phi^{-1}(u)
 = 
\begin{cases}
 \inf\{t\ge0: \Phi(t)>u\}, & u\in[0,\infty), \\
 \infty, & u=\infty.
\end{cases}
\end{equation}
\end{defn}

Let $\Phi\in\biP$. 
Then $\Phi^{-1}$ is finite, increasing and right continuous on $[0,\infty)$
and positive on $(0,\infty)$.
If $\Phi$ is bijective from $[0,\infty]$ to itself, 
then $\Phi^{-1}$ is the usual inverse function of $\Phi$.
Moreover, if $\Phi\in\biP$, then
\begin{equation}\label{inverse ineq}
 \Phi(\Phi^{-1}(u)) \le u \le  \Phi^{-1}(\Phi(u))
 \quad\text{for all $u\in[0,\infty]$},
\end{equation}
which is a generalization of Property 1.3 in \cite{ONeil1965}.
For its proof see \cite[Proposition~2.2]{Shi-Arai-Nakai2019Taiwan}.

For $\Phi, \Psi\in\biP$, 
we write $\Phi\approx\Psi$
if there exists a positive constant $C$ such that
\begin{equation*} 
     \Phi(C^{-1}t)\le\Psi(t)\le\Phi(Ct)
     \quad\text{for all}\ t\in[0,\infty].
\end{equation*} 
For functions $P,Q:[0,\infty]\to[0,\infty]$, 
we write $P\sim Q$ 
if there exists a positive constant $C$ such that
\begin{equation*} 
     C^{-1}P(t)\le Q(t)\le CP(t)
     \quad\text{for all}\ t\in[0,\infty].
\end{equation*} 
Then, for $\Phi,\Psi\in\biP$, 
\begin{equation}\label{approx equiv}
 \Phi\approx\Psi \quad \Leftrightarrow \quad \Phi^{-1}\sim\Psi^{-1},
\end{equation}
see \cite[Lemma~2.3]{Shi-Arai-Nakai2019Taiwan}.

Now we recall the definition of the Young function and give its generalization.

%%%===========================
\begin{defn}\label{defn:Young}
%%%===========================
A function $\Phi\in\biP$ is called a Young function 
(or sometimes also called an Orlicz function) 
if $\Phi$ is convex on $[0,b(\Phi))$.
Let $\iPy$ be the set of all Young functions.
Let $\biPy$ be the set of all $\Phi\in\biP$ such that
$\Phi\approx\Psi$ for some $\Psi\in \iPy$.
\end{defn}

For example, 
$\Phi_1$ and $\Phi_2$ defined by \eqref{Young} are in $\iPy$,
and $\Phi_3$ defined by \eqref{gYoung} is in $\biPy\setminus\iPy$.

%%%===========================
%\begin{defn}\label{defn:OM gY}
%%%===========================
Similar to Definition~\ref{defn:OM}
we also define $\LPp(\R^n)$ and $\wLPp(\R^n)$ 
by using generalized Young functions $\Phi\in\biPy$
together with $\|\cdot\|_{\Phi,\vp,B}$ and $\|\cdot\|_{\Phi,\vp,B,\weak}$, respectively.
Then $\|\cdot\|_{\Phi,\vp,B}$ and $\|\cdot\|_{\Phi,\vp,B,\weak}$ are quasi norms
and thereby $\LPp(\R^n)$ and $\wLPp(\R^n)$ are quasi Banach spaces.
Note that, for $\Phi,\Psi\in\biPy$, 
if $\Phi\approx\Psi$ and $\vp\sim\psi$,
then
$\LPp(\R^n)=\LPsps(\R^n)$ and $\wLPp(\R^n)=\wLPsps(\R^n)$
with equivalent quasi norms.

%%%===========================
\begin{defn}\label{defn:D2 n2}
%%%===========================
\begin{enumerate}
\item 
A function $\Phi\in\biP$ is said to satisfy the $\Delta_2$-condition,
denoted by $\Phi\in\bdtwo$, 
if there exists a constant $C>0$ such that
\begin{equation}\label{Delta2}
 \Phi(2t)\le C\Phi(t) 
 \quad\text{for all } t>0.
\end{equation}
\item
A function $\Phi\in\biP$ is said to satisfy the $\nabla_2$-condition,
denoted by $\Phi\in\bntwo$, 
if there exists a constant $k>1$ such that
\begin{equation}\label{nabla2}
 \Phi(t)\le\frac1{2k}\Phi(kt) 
 \quad\text{for all } t>0.
\end{equation}
\item
Let $\Delta_2=\iPy\cap\bdtwo$ and $\nabla_2=\iPy\cap\bntwo$.
\end{enumerate}
\end{defn}

%%%-------------------------------------------------------------------
Next, 
we say that a function $\theta:(0,\infty)\to(0,\infty)$ 
satisfies the doubling condition if
there exists a positive constant $C$ such that,
for all $r,s\in(0,\infty)$,
\begin{equation}\label{doubling}
 \frac1C\le\frac{\theta(r)}{\theta(s)}\le C,
 \quad\text{if} \ \ \frac12\le\frac{r}{s}\le2.
\end{equation}
We say that $\theta$ is almost increasing (resp. almost decreasing) if
there exists a positive constant $C$ such that, for all $r,s\in(0,\infty)$,
\begin{equation}\label{almost}
 \theta(r)\le C\theta(s) \quad
 (\text{resp.}\ \theta(s)\le C\theta(r)),
 \quad\text{if $r<s$}.
\end{equation}

In this paper we consider the following class of $\vp:(0,\infty)\to(0,\infty)$.
%=============================
\begin{defn}\label{defn:cGdec}
%=============================
Let $\cGdec$ be the set of all functions $\vp:(0,\infty)\to(0,\infty)$
such that 
$\vp$ is almost decreasing
and that
$r\mapsto\vp(r)r^n$ is almost increasing.
That is,
there exists a positive constant $C$ such that, 
for all $r,s\in(0,\infty)$,
\begin{equation*}
 C\vp(r)\ge \vp(s),
 \quad
 \vp(r)r^n\le C\vp(s)s^n, 
 \quad
 \text{if} \ r<s.
\end{equation*}
\end{defn}

If $\vp\in\cGdec$, then $\vp$ satisfies doubling condition.
Let $\psi:(0,\infty)\to(0,\infty)$.
If $\psi\sim\vp$ for some $\vp\in\cGdec$,
then $\psi\in\cGdec$.

%%%===============================================
\begin{rem}\label{rem:vp bijective}
%%%===============================================
Let $\vp\in\cGdec$.
Then there exists $\tvp\in\cGdec$ such that $\vp\sim\tvp$
and that 
$\tvp$ is 
continuous and strictly decreasing, 
see \cite[Proposition~3.4]{Nakai2008Studia}.
Moreover,
if 
\begin{equation}\label{cG* dec}
 \lim_{r\to0}\vp(r)=\infty,
 \quad
 \lim_{r\to\infty}\vp(r)=0,
\end{equation}
then $\tvp$ is bijective from $(0,\infty)$ to itself.
\end{rem}

%%%--------------------------------------------------
At the end of this section, 
we note that, for $\Phi\in\iPy$ and a ball $B$,
the following relation holds
\begin{equation}\label{weak type} 
     \sup_{t\in(0,\infty)} \Phi(t) m(B,f,t) 
     =
     \sup_{t\in(0,\infty)} t\, m(B,f,\Phi^{-1}(t)) 
     =
     \sup_{t\in(0,\infty)} t\, m(B,\Phi(|f|),t).
\end{equation} 
See \cite{Kawasumi-Nakai2020Hiroshima} for the proof of \eqref{weak type}.
Hence, 
the norm inequality 
$\|Tf\|_{\wLPsp}\le C\|f\|_{\LPp}$ holds
if and only if
\begin{equation*}
 t\,m\left(B,\Psi\left(\frac{|Tf|}{C\|f\|_{\LPp}}\right),t\right)
 \le |B|\vp(r)
\end{equation*}
holds for all balls $B=B(a,r)$ and $t\in(0,\infty)$.

%%%===================================================================
%%%===================================================================
\section{Main results}\label{sec:main}
%%%===================================================================
%%%===================================================================

For a measurable function $f$ and $t\ge0$,
recall that $m(f,t)=|\{x\in\R^n: |f(x)|>t\}|$. 
First we state known modular inequalities for 
the Hardy-Littlewood maximal operator $M$.
For their proofs, 
see \cite[Theorem~1.2.1 and Lemma~1.2.4]{Kokilashvili-Krbec1991} for example. 
%==================
\begin{thm}[\cite{Kokilashvili-Krbec1991,Tsereteli1969}]\label{thm:modular}
%==================
Let $\Phi\in\biPy$. 
Then there exists a positive constant $C_{\Phi}$ such that 
\begin{equation}\label{weak modular}
 \sup_{t\in(0,\infty)}\Phi(t) m(Mf,t)
 \le
 \int_{\R^n} \Phi(C_{\Phi}|f(x)|)\,dx.
\end{equation}
Moreover, if $\Phi\in\bntwo$, then
\begin{equation}\label{modular}
 \int_{\R^n} \Phi(Mf(x))\,dx
 \le
 \int_{\R^n} \Phi(C_{\Phi}|f(x)|)\,dx.
\end{equation}
\end{thm}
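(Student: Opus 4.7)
The plan is to derive both modular inequalities from the weak-type $(1,1)$ bound for $M$ via the standard Calder\'on–Zygmund splitting of $f$ at level $t/2$. First I would reduce to the case $\Phi\in\iPy$: by the defining property of $\biPy$ there exists $\Psi\in\iPy$ with $\Phi\approx\Psi$, and the equivalence constant is harmless since it can be absorbed into $C_{\Phi}$. So assume $\Phi$ is a genuine Young function. Writing $f=f_1+f_2$ with $f_1=f\chi_{\{|f|>t/2\}}$ and $|f_2|\le t/2$, the sublinearity gives $Mf\le Mf_1+t/2$, so $\{Mf>t\}\subset\{Mf_1>t/2\}$, and the weak-type $(1,1)$ bound for $M$ yields
\[
 m(Mf,t)\le \frac{2c_n}{t}\int_{\{|f|>t/2\}}|f(x)|\,dx.
\]

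For the weak modular inequality \eqref{weak modular}, I would multiply the above by $\Phi(t)$ and exploit that convexity of $\Phi$ together with $\Phi(0)=0$ makes $t\mapsto \Phi(t)/t$ nondecreasing. On $\{|f(x)|>t/2\}$, equivalently $t<2|f(x)|$, monotonicity gives $\Phi(t)/t\le \Phi(2|f(x)|)/(2|f(x)|)$, hence
\[
 \Phi(t)\,m(Mf,t)\le 2c_n\int_{\{|f|>t/2\}}\frac{\Phi(2|f(x)|)}{2|f(x)|}|f(x)|\,dx\le c_n\int_{\R^n}\Phi(2|f(x)|)\,dx.
\]
Taking the supremum in $t$ and absorbing the scalar $c_n$ by convexity (for $A\ge 1$ one has $A\Phi(Bt)\le \Phi(ABt)$) produces \eqref{weak modular} with $C_\Phi$ a harmless multiple of~$2$.

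For the strong modular inequality \eqref{modular}, I would start from the layer-cake representation in Stieltjes form
\[
 \int_{\R^n}\Phi(Mf(x))\,dx=\int_0^\infty m(Mf,t)\,d\Phi(t),
\]
substitute the weak-type bound above, and apply Fubini so that
\[
 \int_{\R^n}\Phi(Mf)\,dx\le 2c_n\int_{\R^n}|f(x)|\int_0^{2|f(x)|}\frac{d\Phi(t)}{t}\,dx.
\]
The crux is then to prove the pointwise estimate
\[
 \int_0^{2s}\frac{d\Phi(t)}{t}\le C\,\frac{\Phi(2s)}{s},
\]
which is exactly where $\Phi\in\bntwo$ enters. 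Integration by parts reduces this to bounding $\int_0^{2s}\Phi(t)/t^2\,dt$; iterating the defining estimate $\Phi(t/k)\le \Phi(t)/(2k)$ yields $\Phi(2s/k^j)\le \Phi(2s)/(2k)^j$, and a dyadic decomposition of $(0,2s]$ into the scales $[2s/k^{j+1},2s/k^j]$ turns the integral into a geometric series with ratio $1/2$, summing to $\le C\Phi(2s)/s$. Substituting back and once again using convexity to absorb the overall constant produces \eqref{modular}.

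The principal obstacle is the dyadic estimate in the last step: $\bntwo$ is tailored precisely so that the series converges, and without it (e.g.\ $\Phi(t)=t$) the integral $\int_0^T\Phi(t)/t^2\,dt$ diverges, reflecting the unboundedness of $M$ on $L^1$. Some care is also needed with the boundary term $\lim_{\ve\to 0}\Phi(\ve)/\ve$ from integration by parts, but since $\Phi(\ve)/\ve$ is monotone this limit exists in $[0,\infty)$ and the sign works in our favour for an upper bound, so no additional hypothesis on $\Phi$ near the origin is required.
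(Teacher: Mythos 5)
Your argument is correct. Note that the paper does not prove this theorem at all --- it is quoted from Tsereteli and from Kokilashvili--Krbec (Theorem~1.2.1 and Lemma~1.2.4), and your proof is exactly that standard truncation-method argument: the splitting at height $t/2$ is the paper's Lemma~\ref{lem:Mf t}, the monotonicity of $\Phi(t)/t$ gives \eqref{weak modular}, and the geometric-series estimate $\int_0^T\Phi(t)t^{-2}\,dt\lesssim\Phi(T)/T$ is the same use of $\nabla_2$ that the paper makes (via the almost-increasingness of $\Phi(t)/t^p$) in its proof of Theorem~\ref{thm:ww modular}. The only point worth a remark is that, since the paper's Young functions may have $a(\Phi)>0$ or $b(\Phi)<\infty$ (so $\Phi$ takes the value $\infty$), one should check the degenerate ranges $t\le a(\Phi)$ and $t\ge b(\Phi)$ separately, as the paper does in Cases 1--3 of its proof of Theorem~\ref{thm:ww modular}; your inequalities survive these cases with the usual convention $\infty\cdot 0=0$, so this is cosmetic rather than a gap.
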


If $\Phi\in\bntwo$ and $\vp\in\cGdec$,
then 
$\wLPp(\R^n)\subset L^1_{\loc}(\R^n)$, see Lemma~\ref{lem:fint_B wLPp} below.
Hence, $Mf$ is well defined for $f\in\wLPp(\R^n)$,
in particular, for $f\in\wLP(\R^n)$.

Our first result is the following modular inequality.

%==================
\begin{thm}\label{thm:ww modular}
%==================
If $\Phi\in\bntwo$, then
there exists a positive constant $C_{\Phi}$ such that 
\begin{equation}\label{ww modular}
 \sup_{t\in(0,\infty)}\Phi(t) m(Mf,t)
 \le
 \sup_{t\in(0,\infty)}\Phi(t) m(C_{\Phi}f,t).
\end{equation}
\end{thm}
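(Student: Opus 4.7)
The plan is to adapt the classical truncation argument behind the weak $(1,1)$ bound of $M$ to the modular setting, using $\bntwo$ in two places: once to control the tail integral produced by the layer-cake formula, and once, at the end, to absorb multiplicative constants into the dilation constant $C_\Phi$. Write $J(C)=\sup_{s>0}\Phi(Cs)m(f,s)$, so the target reads $\sup_t\Phi(t)m(Mf,t)\le J(C_\Phi)$. Fix $t>0$ and split $f=f_1+f_2$ with $f_1=f\chi_{\{|f|>t/2\}}$. Since $\|Mf_2\|_\infty\le\|f_2\|_\infty\le t/2$, one has $\{Mf>t\}\subset\{Mf_1>t/2\}$, and combining the classical weak-$(1,1)$ inequality for $M$ (with constant $C_w$) with the layer-cake formula for $\|f_1\|_1$ gives
\begin{equation*}
m(Mf,t)\le\frac{2C_w}{t}\|f_1\|_1=\frac{2C_w}{t}\Bigl[\tfrac{t}{2}\,m(f,t/2)+\int_{t/2}^{\infty}m(f,s)\,ds\Bigr].
\end{equation*}

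Next I would quantify $\bntwo$ as a power-growth estimate: iterating $\Phi(s)\le(2k)^{-1}\Phi(ks)$ yields $\Phi(k^{n}s)\ge(2k)^{n}\Phi(s)$ for all $n\ge 0$ and $s>0$, so, setting $p_0=1+(\log 2)/(\log k)>1$, one obtains $\Phi(u)/\Phi(s)\gs(u/s)^{p_0}$ for every $u\ge s>0$. Bounding $m(f,s)\le J(2)/\Phi(2s)$ in both summands and using this growth to show $\int_{t/2}^{\infty}ds/\Phi(2s)\ls t/\Phi(t)$ turns the bracket above into a quantity of order $tJ(2)/\Phi(t)$. Multiplying through by $\Phi(t)\cdot 2C_w/t$ cancels the factor $t/\Phi(t)$ and leads to
\begin{equation*}
\Phi(t)\,m(Mf,t)\le K\,J(2),
\end{equation*}
with $K=K(\Phi)>0$ independent of $t$ and $f$. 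The choice $C=2$ is what makes this cancellation possible: it forces $\Phi(2\cdot t/2)=\Phi(t)$, and the ratio $\Phi(t)/\Phi(t/2)$ that would otherwise appear simply does not arise.

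To finish, I would absorb the constant $K$ into the dilation. Applying $\bntwo$ once more, choose $n\ge 1$ with $2^{n}\ge K$ and set $C_\Phi=2k^{n}$; iteration then gives $\Phi(C_\Phi u)=\Phi(k^{n}(2u))\ge 2^{n}\Phi(2u)\ge K\Phi(2u)$ for every $u>0$, hence $K\,J(2)\le J(C_\Phi)$. Taking the supremum over $t\in(0,\infty)$ on the left then yields the stated inequality.

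The main obstacle is that $\bntwo$ does not entail $\bdtwo$, so $\Phi(t)/\Phi(t/2)$ may be unbounded as $t\to\infty$ (e.g.\ for $\Phi(t)=e^{t^{p}}-1$ with $p>1$ and $k$ large enough); any estimate which separates $J$ from a factor of that form is therefore doomed. The trick incorporated above is to align the dilation $C$ in $J(C)$ with the truncation level in the decomposition of $f$ so that this ratio collapses to $1$, and to defer the absorption of the remaining multiplicative constant $K$ to a second use of $\bntwo$.
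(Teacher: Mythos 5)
Your proof is correct and follows essentially the same route as the paper's: truncate $f$ at height $t/2$, apply the (truncated) weak $(1,1)$ inequality to the large part, use the $\nabla_2$-induced almost-increasing property of $\Phi(s)/s^{p_0}$ to bound the tail integral $\int_{t/2}^{\infty}m(f,s)\,ds$ by $t\,J(2)/\Phi(t)$, and absorb the resulting constant into the dilation. The only cosmetic differences are that the paper subtracts the level in the truncation ($g_t=(f-t/2)\chi_{\{2f>t\}}$), spells out the degenerate cases $t\le a(\Phi)$ and $t\ge b(\Phi)$ explicitly, and absorbs the final constant via convexity rather than a second use of $\nabla_2$.
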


Liu and Wang~\cite{Liu-Wang2013} proved the norm inequality for $\wLP(\R^n)$
with $\Phi\in\dtwo\cap\ntwo$.
Theorem~\ref{thm:ww modular} is its extension.

By Theorems~\ref{thm:modular} and \ref{thm:ww modular}
we will prove the following boundedness,
which is an extension of \cite[Theorem~6.1]{Nakai2008Studia}.

%==================
\begin{thm}\label{thm:M}
%==================
Let $\Phi\in\biPy$ and $\vp\in\cGdec$.
Then the Hardy-Littlewood maximal operator $M$ 
is bounded from $\LPp(\R^n)$ to $\wLPp(\R^n)$.
Moreover, if $\Phi\in\bntwo$, then
the operator $M$ is bounded from $\LPp(\R^n)$ to itself
and from $\wLPp(\R^n)$ to itself. 
\end{thm}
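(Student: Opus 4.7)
The plan is to handle the three statements in parallel. Fix a ball $B=B(a,r)$; by the definitions of $\LPp$ and $\wLPp$, it suffices to bound $\|Mf\|_{\Phi,\vp,B,\weak}$ (or $\|Mf\|_{\Phi,\vp,B}$) by a constant multiple of $\|f\|_{\LPp}$ (or $\|f\|_{\wLPp}$), uniformly in $B$. The decomposition I would use is the standard one: write $f=f_1+f_2$ with $f_1=f\chi_{2B}$ and $f_2=f\chi_{\R^n\setminus 2B}$, so that $Mf\le Mf_1+Mf_2$.

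For the local piece $Mf_1$, set $\lambda=C_\Phi\|f\|_{\LPp}$ or $C_\Phi\|f\|_{\wLPp}$ as appropriate, with $C_\Phi$ taken from the relevant modular inequality. Applying Theorem~\ref{thm:modular} (weak version for the $\LPp\to\wLPp$ case, strong version under $\Phi\in\bntwo$ for $\LPp\to\LPp$) or Theorem~\ref{thm:ww modular} (for $\wLPp\to\wLPp$) to the function $f_1/\lambda$ reduces matters to bounding an integral or supremum of $\Phi(|f|/\|f\|_{\LPp})$, respectively $\Phi(|f|/\|f\|_{\wLPp})$, over $2B$. By the definition of the source (quasi-)norm this quantity is at most $|2B|\vp(2r)$, and since $\vp\in\cGdec$ is doubling we have $|2B|\vp(2r)\ls|B|\vp(r)$. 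A short scaling argument using convexity (or rather the equivalent convex representative of $\Phi$) then converts this into $\|Mf_1\|_{\Phi,\vp,B,\weak}\ls\|f\|_{\LPp}$ in the first two cases, and $\|Mf_1\|_{\Phi,\vp,B,\weak}\ls\|f\|_{\wLPp}$ in the third, and similarly for the strong local norm in the $\LPp\to\LPp$ case.

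For the far-field piece $Mf_2$, a standard geometric argument gives the pointwise estimate
\begin{equation*}
Mf_2(x)\ls\sup_{s\ge r}\fint_{B(a,s)}|f(y)|\,dy\quad\text{for every }x\in B.
\end{equation*}
To control each average, I would invoke, for the $\LPp$ source, Jensen's inequality applied to a convex Young function $\Psi\in\iPy$ with $\Psi\approx\Phi$ (available because $\Phi\in\biPy$), combined with $\Phi^{-1}\sim\Psi^{-1}$, yielding $\fint_{B(a,s)}|f|\ls\|f\|_{\LPp}\Phi^{-1}(\vp(s))$. For the $\wLPp$ source, the corresponding bound follows from a layer-cake decomposition of the weak defining inequality and requires the $\bntwo$-assumption to ensure integrability at infinity; this is part of the preparatory $\wLPp\subset L^1_{\loc}$ material in Section~\ref{sec:lemmas}. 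Since $\vp$ is almost decreasing and $\Phi^{-1}$ is equivalent to a concave function, $\sup_{s\ge r}\Phi^{-1}(\vp(s))\ls\Phi^{-1}(\vp(r))$, so $Mf_2(x)\ls\|f\|_{\LPp}\Phi^{-1}(\vp(r))$ or $\|f\|_{\wLPp}\Phi^{-1}(\vp(r))$ on $B$. A short distribution-function computation shows that a pointwise bound $|g|\le M$ on $B$ forces both $\|g\|_{\Phi,\vp,B,\weak}$ and $\|g\|_{\Phi,\vp,B}$ to be at most $M/\Phi^{-1}(\vp(r))$; applied with $M\ls\|f\|\cdot\Phi^{-1}(\vp(r))$, this yields the required bound for the $f_2$-contribution.

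Combining the two pieces via the (quasi-)triangle inequality of the local (quasi-)norm and taking the supremum over $B$ finishes the proof. The main obstacle will be the weak-to-weak statement: the bound $\fint_{B(a,s)}|f|\ls\|f\|_{\wLPp}\Phi^{-1}(\vp(s))$ cannot be obtained by a direct Jensen argument, and the underlying distribution-function integral converges only under the $\bntwo$-assumption. This is precisely where the new weak-weak modular inequality in Theorem~\ref{thm:ww modular}, together with the structural properties of $\wLPp$ to be developed in Section~\ref{sec:lemmas}, become essential.
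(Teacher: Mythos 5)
Your proposal is correct and follows essentially the same route as the paper: the decomposition $f=f_1+f_2$ with $f_1=f\chi_{2B}$, the three modular inequalities (Theorems~\ref{thm:modular} and \ref{thm:ww modular}) for the local part, and the far-field pointwise bound $Mf_2\ls\Phi^{-1}(\vp(r))\|f\|$ on $B$ via the embedding into $L^{(1,\Phi^{-1}(\vp))}(\R^n)$ (Lemmas~\ref{lem:Nakai2008Studia9.4} and \ref{lem:Mf wLPp}), with the $\bntwo$-assumption entering exactly where you say it does. The only cosmetic difference is that you derive the average bound $\fint_B|f|\ls\Phi^{-1}(\vp(r))\|f\|_{\LPp}$ by Jensen's inequality, whereas the paper obtains it from the generalized H\"older inequality \eqref{g Holder B}.
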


Next we state the boundedness of generalized fractional integral operators $\Ir$.

%=================================================
\begin{thm}\label{thm:Ir}
%=================================================
Let $\Phi,\Psi\in\biPy$, $\vp\in\cGdec$
and $\rho:(0,\infty)\to(0,\infty)$.
Assume that $\rho$ satisfies \eqref{int rho} and \eqref{sup rho}.
\begin{enumerate}
\item\label{Ir 1}
Assume that $\vp$ satisfies \eqref{cG* dec}
and that 
there exists a positive constant $A$ such that,
for all $r\in(0,\infty)$,
\begin{equation}\label{Ir A}
 \int_0^r\frac{\rho(t)}{t}\,dt\;{\Phi}^{-1}(\vp(r)) 
  +\int_r^{\infty}\frac{\rho(t)\,\Phi^{-1}(\vp(t))}{t}\,dt
 \le
 A\Psi^{-1}(\vp(r)). 
\end{equation}
Then, for any positive constant $C_0$, there exists a positive constant $C_1$ such that, 
for all $f\in \LPp(\R^n)$ with $f\not\equiv0$,
\begin{equation}\label{Ir pointwise}
 \Psi\left(\frac{|\Ir f(x)|}{C_1\|f\|_{\LPp}}\right)
 \le
 \Phi\left(\frac{Mf(x)}{C_0\|f\|_{\LPp}}\right),
 \quad x\in\R^n.
\end{equation}
Consequently, $I_{\rho}$ is bounded 
from $\LPp(\R^n)$ to $\wLPsp(\R^n)$. 
Moreover, if $\Phi\in\bntwo$, then,
for all $f\in \wLPp(\R^n)$ with $f\not\equiv0$,
\begin{equation}\label{Ir pointwise w}
 \Psi\left(\frac{|\Ir f(x)|}{C_1\|f\|_{\wLPp}}\right)
 \le
 \Phi\left(\frac{Mf(x)}{C_0\|f\|_{\wLPp}}\right),
 \quad x\in\R^n.
\end{equation}
Consequently, 
if $\Phi\in\bntwo$, then
$I_{\rho}$ is bounded 
from $\LPp(\R^n)$ to $\LPsp(\R^n)$ by \eqref{Ir pointwise}
and from $\wLPp(\R^n)$ to $\wLPsp(\R^n)$ by \eqref{Ir pointwise w}. 

\item\label{Ir 2}
If $\Ir$ is bounded from $\LPp(\R^n)$ to $\wLPsp(\R^n)$,
then
there exists a positive constant $A'$ such that,
for all $r\in(0,\infty)$,
\begin{equation}\label{Ir A'}
 \int_0^r\frac{\rho(t)}{t}\,dt\;{\Phi}^{-1}(\vp(r)) 
 \le
 A'\Psi^{-1}(\vp(r)). 
\end{equation}
Moreover, 
under the assumption that
there exists a positive constant $C$ such that, for all $r\in(0, \infty)$,
\begin{equation}\label{int vp tn-1}
 \int_0^r \vp(t) t^{n-1}dt \le C \vp(r) r^n,
\end{equation}
if $\Ir$ is bounded from $\LPp(\R^n)$ to $\wLPsp(\R^n)$,
then \eqref{Ir A} holds for some $A\in(0,\infty)$ and for all $r\in(0,\infty)$.

\item\label{Ir 3} 
In the part~\ref{Ir 2} the boundedness from $\LPp(\R^n)$ to $\wLPsp(\R^n)$
can be replaced by the boundedness from $\LPp(\R^n)$ to $L^{(1,\Psi^{-1}(\vp))}(\R^n)$.
\end{enumerate}
\end{thm}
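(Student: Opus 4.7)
The plan is to treat part~\ref{Ir 1} by a Hedberg-type pointwise domination of $\Ir f$ by $Mf$ plus a tail term, and parts~\ref{Ir 2} and \ref{Ir 3} by testing on $\chi_{B(0,r)}$ and on a radial weight $g_r(y)=\Phi^{-1}(\vp(|y|))\chi_{\{|y|>r\}}(y)$. For part~\ref{Ir 1} I would split $\Ir f(x)=\int_{|x-y|<r}+\int_{|x-y|\ge r}$ for a parameter $r>0$ to be chosen. A dyadic decomposition together with the standard annular estimate dominates the near piece by $CMf(x)\int_0^{2r}\rho(t)/t\,dt$. For the far piece, summing over dyadic annuli $2^kr\le|x-y|<2^{k+1}r$, applying \eqref{sup rho} to replace $\sup\rho$ by an integral of $\rho(t)/t$, and invoking the Jensen-type bound
\[
 \fint_{B(x,2^{k+1}r)}|f|\,dy\le C\Phi^{-1}(\vp(2^{k+1}r))\|f\|_{\LPp},
\]
which follows from $\|f\|_{\Phi,\vp,B}\le\|f\|_{\LPp}$ applied to a Young representative of $\Phi\in\biPy$, together with the almost-monotonicity of $\Phi^{-1}\circ\vp$, I obtain
\[
 |\Ir f(x)|\ls Mf(x)\int_0^{2r}\frac{\rho(t)}{t}\,dt+\|f\|_{\LPp}\int_{r}^{\infty}\frac{\rho(t)\Phi^{-1}(\vp(t))}{t}\,dt.
\]
Using \eqref{cG* dec} and Remark~\ref{rem:vp bijective}, I can choose $r=r(x)$ with $\Phi^{-1}(\vp(r))=Mf(x)/(C_0\|f\|_{\LPp})$; inserting this into \eqref{Ir A} and using $\Psi(\Psi^{-1}(u))\le u$ from \eqref{inverse ineq} yields the pointwise estimate \eqref{Ir pointwise}. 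Combining \eqref{Ir pointwise} with \eqref{weak type} and the weak modular inequality \eqref{weak modular} gives $\Ir\colon\LPp\to\wLPsp$. When $\Phi\in\bntwo$, the analogous Jensen-type bound with $\|f\|_{\LPp}$ replaced by $\|f\|_{\wLPp}$ (justified by the layer-cake integral $\int_0^\infty m(B,f/\lambda,t)\,dt$ converging via the $\bntwo$ decay of $1/\Phi$) yields \eqref{Ir pointwise w}; then \eqref{modular} delivers $\Ir\colon\LPp\to\LPsp$, while \eqref{ww modular} of Theorem~\ref{thm:ww modular} delivers $\Ir\colon\wLPp\to\wLPsp$.

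For part~\ref{Ir 2}, testing with $f=\chi_{B(0,r)}$ gives $\|f\|_{\LPp}\sim 1/\Phi^{-1}(\vp(r))$, since the defining supremum over balls $B(a,s)$ is controlled by $\vp(s)(s/r)^n\gs\vp(r)$ for $s>r$ through $\vp\in\cGdec$. For $x\in B(0,r/2)$, $B(x,r/2)\subset B(0,r)$, and polar integration produces $\Ir f(x)\gs\int_0^{r/2}\rho(t)/t\,dt$; comparing with $\|\chi_{B(0,r/2)}\|_{\Psi,\vp,B(0,r/2),\weak}\sim 1/\Psi^{-1}(\vp(r))$ then forces \eqref{Ir A'}. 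To upgrade to \eqref{Ir A} under \eqref{int vp tn-1}, I test with $g_r$. The key lemma is a uniform bound $\|g_r\|_{\LPp}\le C$: on any ball $B=B(a,s)$, using $\Phi(\Phi^{-1}(\vp(|y|)))\le\vp(|y|)$, one reduces to controlling $\int_{B\cap\{|y|>r\}}\vp(|y|)\,dy$, and splitting by the position of $B$ relative to $B(0,r)$ and combining \eqref{int vp tn-1} with the almost-decreasing property of $\vp$ yields $\fint_B\Phi(g_r)\ls\vp(s)$. For $|x|\le r/2$, radial integration gives $\Ir g_r(x)\gs\int_r^\infty\rho(t)\Phi^{-1}(\vp(t))/t\,dt$, and the same weak-type comparison at $B(0,r/2)$ supplies the missing second term of \eqref{Ir A}.

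Part~\ref{Ir 3} is obtained by replacing the weak-$\wLPsp$ evaluation of $\Ir f$ on $B(0,r/2)$ with the mean evaluation $\fint_{B(0,r/2)}|\Ir f|\le\Psi^{-1}(\vp(r))\|\Ir f\|_{L^{(1,\Psi^{-1}(\vp))}}$; since the lower bounds on $\Ir\chi_{B(0,r)}$ and $\Ir g_r$ are already pointwise lower bounds on $B(0,r/2)$, they pass through the mean exactly as through the weak quasi-norm, so the same test functions yield the same necessary conditions. The main obstacle will be the uniform $\LPp$-bound for $g_r$ in part~\ref{Ir 2}: simultaneously handling balls $B(a,s)$ of arbitrary centre and radius requires a careful interplay between the almost-decreasingness of $\vp$ (which controls $\vp(|y|)$ on the outer portion of $B$) and the integral condition \eqref{int vp tn-1} (which is needed when $B$ lies close to the origin), and this is precisely the point at which the hypothesis \eqref{int vp tn-1} becomes essential. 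The remaining steps are routine adaptations of the Hedberg--Nakai circle of ideas.
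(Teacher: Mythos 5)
Your strategy coincides with the paper's own proof: part~\ref{Ir 1} is the Hedberg-type split combined with the annular estimate and the mean-value bounds of Lemmas~\ref{lem:fint_B f} and \ref{lem:fint_B wLPp}, and parts~\ref{Ir 2}--\ref{Ir 3} use exactly the test functions $\chi_{B(0,r)}$ and $g_r(y)=\Phi^{-1}(\vp(|y|))\chi_{\{|y|>r\}}(y)$ together with $\|\chi_{B(0,r)}\|_{\wLPsp}\sim\|\chi_{B(0,r)}\|_{L^{(1,\Psi^{-1}(\vp))}}\sim 1/\Psi^{-1}(\vp(r))$. The uniform bound $\|g_r\|_{\LPp}\ls 1$ that you flag as the main obstacle is the paper's Lemma~\ref{lem:g}, and it needs no case analysis on the position of $B$: since $y\mapsto\vp(|y|)$ is radially (almost) decreasing, $\fint_{B(a,s)}\vp(|y|)\,dy\le\fint_{B(0,s)}\vp(|y|)\,dy\sim s^{-n}\int_0^s\vp(t)t^{n-1}\,dt\ls\vp(s)$ by \eqref{int vp tn-1}.

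There is one genuine gap, in the selection of the Hedberg radius. You choose $r=r(x)$ with $\Phi^{-1}(\vp(r))=Mf(x)/(C_0\|f\|_{\LPp})$. But $\Phi^{-1}(u)=\inf\{t:\Phi(t)>u\}\ge a(\Phi)$ for every $u$, so the range of $r\mapsto\Phi^{-1}(\vp(r))$ omits $(0,a(\Phi))$; the theorem explicitly admits Young functions with $a(\Phi)>0$ (e.g.\ $\Phi_2(t)=\max(0,t^2-4)$ in \eqref{Young}), and at points where $0<Mf(x)/(C_0\|f\|_{\LPp})\le a(\Phi)$ the equation has no solution. These are precisely the points where $\Phi(Mf(x)/(C_0\|f\|_{\LPp}))=0$, and there \eqref{Ir pointwise} still asserts something nontrivial, namely $|\Ir f(x)|\le C_1\,a(\Psi)\,\|f\|_{\LPp}$. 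The paper avoids this by solving $\vp(r)=\Phi(Mf(x)/C_0)$ instead (always possible when that value lies in $(0,\infty)$, by \eqref{cG* dec}) and by treating the degenerate case $\Phi(Mf(x)/C_0)=0$ separately: letting $r\to\infty$ in \eqref{Ir A} yields $\int_0^{\infty}\rho(t)t^{-1}\,dt\;\Phi^{-1}(0)\ls\Psi^{-1}(0)$, whence $|\Ir f(x)|\ls\int_0^{\infty}\rho(t)t^{-1}\,dt\;Mf(x)\ls\Psi^{-1}(0)\,\|f\|_{\LPp}$, which is exactly the required bound. Your write-up needs this extra case. A smaller point: the global modular inequality \eqref{weak modular} alone does not produce the ball-localized bound $\sup_t\Psi(t)\,m(B,\cdot,t)\le|B|\vp(r)$; after \eqref{Ir pointwise} one should pass through Theorem~\ref{thm:M} (whose proof contains the $f=f\chi_{2B}+f\chi_{(2B)^c}$ localization) rather than combining \eqref{weak type} with \eqref{weak modular} directly.
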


The part~\ref{Ir 1} in the above theorem is 
an extension of \cite[Theorem~7.3]{Nakai2008Studia}.
In the part~\ref{Ir 3}, if $\Psi\in\bntwo$, 
then $\wLPsp(\R^n)\subset L^{(1,\Psi^{-1}(\vp))}(\R^n)$,
see Lemma~\ref{lem:fint_B wLPp} below.
In this case the part~\ref{Ir 3} implies the part~\ref{Ir 2}.

Thirdly, we state the boundedness of the generalized fractional maximal operators $\Mr$.
In this case we need neither the assumption \eqref{int rho} nor \eqref{sup rho}
on the function $\rho:(0,\infty)\to(0,\infty)$.

%=============================
\begin{thm}\label{thm:Mr}
%=============================
Let $\Phi,\Psi\in\biPy$, $\vp\in\cGdec$
and $\rho:(0,\infty)\to(0,\infty)$.
\begin{enumerate}
\item\label{Mr 1}
Assume that $\dlim_{r\to\infty}\vp(r)=0$
or that $\Psi^{-1}(t)/\Phi^{-1}(t)$ is almost decreasing on $(0,\infty)$.
If there exists a positive constant $A$ such that,
for all $r\in(0,\infty)$,
\begin{equation}\label{Mr A}
 \left(\sup_{0<t\le r}\rho(t)\right)\Phi^{-1}(\vp(r))
 \le
 A\Psi^{-1}(\vp(r)),
\end{equation}
then, for any positive constant $C_0$, there exists a positive constant $C_1$ such that, 
for all $f\in \LPp(\R^n)$ with $f\not\equiv0$,
\begin{equation}\label{Mr pointwise}
 \Psi\left(\frac{\Mr f(x)}{C_1\|f\|_{\LPp}}\right) 
 \le
 \Phi\left(\frac{Mf(x)}{C_0\|f\|_{\LPp}}\right),
 \quad x\in\R^n.
\end{equation}
Consequently, 
$\Mr$ is bounded from $\LPp(\R^n)$ to $\wLPsp(\R^n)$.
Moreover, if $\Phi\in\bntwo$, 
then, 
for all $f\in \wLPp(\R^n)$ with $f\not\equiv0$,
\begin{equation}\label{Mr pointwise w}
 \Psi\left(\frac{\Mr f(x)}{C_1\|f\|_{\wLPp}}\right) 
 \le
 \Phi\left(\frac{Mf(x)}{C_0\|f\|_{\wLPp}}\right),
 \quad x\in\R^n.
\end{equation}
Consequently, 
if $\Phi\in\bntwo$, then 
$\Mr$ is bounded from $\LPp(\R^n)$ to $\LPsp(\R^n)$ by \eqref{Mr pointwise}
and from $\wLPp(\R^n)$ to $\wLPsp(\R^n)$ by \eqref{Mr pointwise w}.

\item\label{Mr 2} 
If $\Mr$ is bounded from $\LPp(\R^n)$ to $\wLPsp(\R^n)$,
then \eqref{Mr A} holds for some $A\in(0,\infty)$ and for all $r\in(0,\infty)$.

\item\label{Mr 3}
In the part~\ref{Mr 2} the boundedness from $\LPp(\R^n)$ to $\wLPsp(\R^n)$
can be replaced by the boundedness from $\LPp(\R^n)$ to $L^{(1,\Psi^{-1}(\vp))}(\R^n)$.
\end{enumerate}
\end{thm}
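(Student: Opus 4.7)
The theorem decomposes into a sufficiency statement~\ref{Mr 1} and two necessity statements~\ref{Mr 2}, \ref{Mr 3}. My plan is to derive the sufficiency by pointwise domination of $\Mr f$ by the Hardy-Littlewood maximal function $Mf$, then invoke Theorem~\ref{thm:M}; and to derive the necessity by testing on characteristic functions of balls.

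For \ref{Mr 1}, set $N=\|f\|_{\LPp}>0$, fix $x\in\R^n$, and for each ball $B=B(a,t)\ni x$ I would record two competing bounds on $\rho(t)\fint_B|f|$. The geometric bound $\fint_B|f|\le 2^n Mf(x)$ follows from $B\subset B(x,2t)$, giving $\rho(t)\fint_B|f|\le 2^n\rho(t)Mf(x)$. The Orlicz bound $\fint_B|f|\ls N\Phi^{-1}(\vp(t))$ follows from Jensen's inequality applied to a convex Young function equivalent to $\Phi$ (available since $\Phi\in\biPy$), giving $\rho(t)\fint_B|f|\ls \rho(t)N\Phi^{-1}(\vp(t))$. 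The heart of the argument is then to split the supremum defining $\Mr f(x)$ at a threshold $t_0=t_0(x)$ chosen so that $\Phi^{-1}(\vp(t_0))\sim Mf(x)/N$: for $t\le t_0$, the geometric bound together with $\sup_{s\le t_0}\rho(s)$ and \eqref{Mr A} gives $\ls N\Psi^{-1}(\vp(t_0))$; for $t>t_0$, the Orlicz bound together with \eqref{Mr A} and the almost-decreasing property of $\vp$ gives the same conclusion. The two alternative hypotheses --- $\dlim_{r\to\infty}\vp(r)=0$ versus $\Psi^{-1}/\Phi^{-1}$ almost decreasing --- are exactly what is needed to cover the two boundary regimes in which $Mf(x)/N$ fails to lie in the range of $\Phi^{-1}\circ\vp$. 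Applying $\Psi$ then produces \eqref{Mr pointwise}, and the same argument gives \eqref{Mr pointwise w} once Lemma~\ref{lem:fint_B wLPp} is invoked under the hypothesis $\Phi\in\bntwo$ to recover the Orlicz bound for $f\in\wLPp$. The four boundedness conclusions follow from Theorem~\ref{thm:M} applied to both sides.

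For \ref{Mr 2} and \ref{Mr 3}, I would test the assumed boundedness on $f=\chi_{B_0}$ with $B_0=B(a,r)$. The first ingredient is $\|\chi_{B_0}\|_{\LPp}\sim 1/\Phi^{-1}(\vp(r))$: the lower bound comes from plugging $B_0$ into the defining supremum, while the upper bound is obtained by analyzing all competing balls $B'$ in the three geometric cases ($B'\subset B_0$, $B_0\subset B'$, partial overlap), using the almost-monotonicity of $\vp$ and of $r\mapsto\vp(r)r^n$ together with the sublinearity $\Phi^{-1}(u/C)\ge\Phi^{-1}(u)/C$ (a consequence of convexity). The second ingredient is the pointwise lower bound
\[
\Mr\chi_{B_0}(x)\;\ge\;\sup_{0<t'\le r/2}\rho(t'),\qquad x\in B(a,r/2),
\]
which is immediate: for each admissible $t'$, the ball $B(x,t')$ lies inside $B_0$, contains $x$, and satisfies $\fint_{B(x,t')}\chi_{B_0}=1$; the locus of valid $x$ has measure $2^{-n}|B_0|$.

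Feeding these into the target norms, for \ref{Mr 2} a short distribution-function argument using $B_0$ in the supremum defining $\|\cdot\|_{\wLPsp}$ yields $\|\Mr\chi_{B_0}\|_{\wLPsp}\gs (\sup_{0<t'\le r/2}\rho(t'))/\Psi^{-1}(\vp(r))$ after absorbing constants via $\Psi^{-1}(Cu)\ls C\Psi^{-1}(u)$; for \ref{Mr 3} the analogous lower bound is even more direct, since $\fint_{B_0}\Mr\chi_{B_0}\ge 2^{-n}\sup_{t'\le r/2}\rho(t')$. Combining with the upper bound forced by the assumed boundedness gives $(\sup_{0<t'\le r/2}\rho(t'))\Phi^{-1}(\vp(r))\ls\Psi^{-1}(\vp(r))$, and replacing $r$ by $2r$ together with the doubling of $\vp$ (inherited from $\vp\in\cGdec$) converts this into \eqref{Mr A}. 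The main obstacle, I expect, is the threshold construction in \ref{Mr 1}: since $\Phi^{-1}$ and $\vp$ need not be continuous or surjective, defining $t_0(x)$ rigorously --- and treating the boundary regimes where the natural choice fails --- is the one genuinely delicate step, and it is precisely why the statement carries the two alternative hypotheses. A secondary bookkeeping burden is the systematic passage to an equivalent convex Young function whenever $\Phi\in\biPy\setminus\iPy$, but this affects only the constants, not the structure of the argument.
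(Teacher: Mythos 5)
Your proposal is correct and follows essentially the same route as the paper: the paper obtains the pointwise estimate \eqref{Mr pointwise} by citing \cite[Theorem~5.1]{Shi-Arai-Nakai2020Banach} (whose argument is exactly your two-bound/threshold scheme, with Lemma~\ref{lem:fint_B wLPp} substituted for Lemma~\ref{lem:fint_B f} in the weak case), deduces the norm bounds from Theorem~\ref{thm:M}, and proves parts (ii)--(iii) by testing on $\chi_{B(0,r)}$ via Lemma~\ref{lem:Mr chi} and the norm equivalence of Lemma~\ref{lem:chi norm 2}. The only differences are cosmetic: the paper's Lemma~\ref{lem:Mr chi} gives the lower bound $\sup_{0<t\le r}\rho(t)$ on all of $B(0,r)$ rather than on the half-radius ball, so no final doubling adjustment is needed, and $\|\chi_B\|_{\LPp}\sim 1/\Phi^{-1}(\vp(r))$ is quoted rather than re-derived.
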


The part~\ref{Mr 1} in the above theorem is 
an extension of \cite[Theorem~5.1]{Shi-Arai-Nakai2020Banach}.
If $\Psi\in\bntwo$, 
then the part~\ref{Mr 3} implies the part~\ref{Mr 2} as same as Theorem~\ref{thm:Ir}.

%=================================================
\begin{rem}\label{rem:IrMr}
%=================================================
From \eqref{sup rho} and \eqref{Ir A}
it follows that
\begin{equation*}
 \left(\sup_{0<t\le r}\rho(t)\right)\Phi^{-1}(\vp(r))
 \ls
 \int_0^{K_2r}\frac{\rho(t)}{t}\,dt\, \Phi^{-1}(\vp(r))
 \ls
 \Psi^{-1}(\vp(r)),
\end{equation*}
which is the condition \eqref{Mr A}.
If $\rho(r)=(\log(1/r))^{-\alpha}$ for small $r>0$
or $\rho(r)=(\log r)^{\alpha}$ for large $r>0$ with $\alpha\ge0$,
then the condition \eqref{Mr A} is strictly weaker than \eqref{Ir A}.
\end{rem}

%%%-------------------------------------------------------------------

For the case $\vp(r)=1/r^n$, we have the following corollaries.

%=================================================
\begin{cor}[{\cite{Gallardo1988,Krein-Petunin-Semenov1982,Kokilashvili-Krbec1991,Liu-Wang2013}}]\label{cor:M Orl}
%=================================================
Let $\Phi\in\biPy$.
Then the Hardy-Littlewood maximal operator $M$ 
is bounded from $\LP(\R^n)$ to $\wLP(\R^n)$.
Moreover, if $\Phi\in\bntwo$, then
the operator $M$ is bounded from $\LP(\R^n)$ to itself
and from $\wLP(\R^n)$ to itself.
\end{cor}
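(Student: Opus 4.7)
The plan is to obtain this corollary as a direct specialization of Theorem~\ref{thm:M} to the weight $\vp(r) = 1/r^n$. The excerpt explicitly records that for this particular $\vp$ one has $\LPp(\R^n) = \LP(\R^n)$ and $\wLPp(\R^n) = \wLP(\R^n)$, so all three boundedness assertions of the corollary are literal restatements of Theorem~\ref{thm:M} once the weight is inserted; no new analytic ingredient is needed.

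The only verification required is that $\vp(r) = 1/r^n$ belongs to the class $\cGdec$ of Definition~\ref{defn:cGdec}. For this I would simply observe that $\vp$ is strictly, hence almost, decreasing and that $r \mapsto \vp(r)r^n \equiv 1$ is trivially almost increasing, both with constant $1$. Once $\vp \in \cGdec$ is in hand, Theorem~\ref{thm:M} yields the $\LPp \to \wLPp$ bound for every $\Phi \in \biPy$, and, under the additional hypothesis $\Phi \in \bntwo$, the $\LPp \to \LPp$ and $\wLPp \to \wLPp$ bounds. Translating back via the identifications $\LPp = \LP$ and $\wLPp = \wLP$ completes the proof.

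There is no genuine obstacle here: the corollary is precisely the $\vp(r)=1/r^n$ case of Theorem~\ref{thm:M}, and its role in the text is to make explicit how the classical Orlicz-space results of Gallardo, Krein--Petunin--Semenov, Kokilashvili--Krbec, and Liu--Wang are subsumed by the Orlicz--Morrey framework. All the genuine analytic work, namely the weak-weak modular inequality for $M$ and the strong modular inequality under $\bntwo$, has already been carried out in Theorems~\ref{thm:modular}, \ref{thm:ww modular}, and \ref{thm:M}, so the proof reduces to the bookkeeping sketched above.
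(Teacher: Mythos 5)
Your proposal is correct and is exactly the paper's intended argument: the corollary is stated as the $\vp(r)=1/r^n$ specialization of Theorem~\ref{thm:M}, and the only check needed is that $\vp(r)=1/r^n$ lies in $\cGdec$, which holds with constant $1$ since $\vp$ is decreasing and $\vp(r)r^n\equiv1$. The identifications $\LPp(\R^n)=\LP(\R^n)$ and $\wLPp(\R^n)=\wLP(\R^n)$ for this weight are already recorded in the paper, so nothing further is required.
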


For the boundedness of the operator $M$ on Orlicz spaces,
see also Cianchi~\cite{Cianchi1999} and Kita~\cite{Kita1996,Kita1997}.
Cianchi~\cite{Cianchi1999} also investigated the operators $\Ma$ and $\Ia$ on Orlicz spaces.

%=================================================
\begin{cor}\label{cor:Ir Orl}
%=================================================
Let $\Phi,\Psi\in\biPy$ and $\rho:(0,\infty)\to(0,\infty)$.
Assume that $\rho$ satisfies \eqref{int rho} and \eqref{sup rho}.
\begin{enumerate}
\item
Assume that 
there exists a positive constant $A$ such that,
for all $r\in(0,\infty)$,
\begin{equation}\label{Ir A Orl}
 \int_0^r\frac{\rho(t)}{t}\,dt\;{\Phi}^{-1}(1/r^n) 
  +\int_r^{\infty}\frac{\rho(t)\,\Phi^{-1}(1/t^n)}{t}\,dt
 \le
 A\Psi^{-1}(1/r^n). 
\end{equation}
Then, for any positive constant $C_0$, there exists a positive constant $C_1$ such that, 
for all $f\in \LP(\R^n)$ with $f\not\equiv0$,
\begin{equation}\label{Ir pointwise Orl}
 \Psi\left(\frac{|\Ir f(x)|}{C_1\|f\|_{\LP}}\right)
 \le
 \Phi\left(\frac{Mf(x)}{C_0\|f\|_{\LP}}\right),
 \quad x\in\R^n.
\end{equation}
Consequently, $I_{\rho}$ is bounded 
from $\LP(\R^n)$ to $\wLPs(\R^n)$. 
Moreover, if $\Phi\in\bntwo$, then,
for all $f\in \wLP(\R^n)$ with $f\not\equiv0$,
\begin{equation}\label{Ir pointwise wOrl}
 \Psi\left(\frac{|\Ir f(x)|}{C_1\|f\|_{\wLP}}\right)
 \le
 \Phi\left(\frac{Mf(x)}{C_0\|f\|_{\wLP}}\right),
 \quad x\in\R^n.
\end{equation}
Consequently, 
if $\Phi\in\bntwo$, then
$I_{\rho}$ is bounded 
from $\LP(\R^n)$ to $\LPs(\R^n)$ 
and from $\wLP(\R^n)$ to $\wLPs(\R^n)$. 

\item\label{Ir 2 Orl}
If $\Ir$ is bounded from $\LP(\R^n)$ to $\wLPs(\R^n)$,
then
there exists a positive constant $A'$ such that,
for all $r\in(0,\infty)$,
\begin{equation}\label{Ir A' Orl}
 \int_0^r\frac{\rho(t)}{t}\,dt\;{\Phi}^{-1}(1/r^n) 
 \le
 A'\Psi^{-1}(1/r^n). 
\end{equation}
Moreover, 
under the assumption that
there exists a positive constant $A''$ such that, for all $r\in(0, \infty)$,
\begin{equation}\label{Ir A'' Orl}
 \int_r^{\infty}\frac{\rho(t)\,\Phi^{-1}(1/t^n)}{t}\,dt
 \le
 A''\Psi^{-1}(1/r^n), 
\end{equation}
if $\Ir$ is bounded from $\LP(\R^n)$ to $\wLPs(\R^n)$,
then
\eqref{Ir A Orl} holds for some $A\in(0,\infty)$ and for all $r\in(0,\infty)$.

\item
In the part~\ref{Ir 2 Orl} the boundedness from $\LP(\R^n)$ to $\wLPs(\R^n)$
can be replaced by the boundedness from $\LP(\R^n)$ to $L^{(1,\psi)}(\R^n)$
with $\psi(r)=\Psi^{-1}(1/r^n)$.
\end{enumerate}
\end{cor}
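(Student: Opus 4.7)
The plan is to deduce this corollary from Theorem~\ref{thm:Ir} by specializing to $\vp(r)=1/r^n$. For this choice the identifications $\LPp(\R^n)=\LP(\R^n)$ and $\wLPp(\R^n)=\wLP(\R^n)$ (recorded in Section~\ref{sec:intro}) reduce \eqref{Ir A} and \eqref{Ir A'} to \eqref{Ir A Orl} and \eqref{Ir A' Orl}. Before invoking the theorem, I would verify that $\vp(r)=1/r^n$ satisfies the required structural hypotheses: it is strictly decreasing and $\vp(r)r^n\equiv1$ is trivially almost increasing, so $\vp\in\cGdec$; moreover $\lim_{r\to0}\vp(r)=\infty$ and $\lim_{r\to\infty}\vp(r)=0$, so \eqref{cG* dec} holds.

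With these verifications in hand, part~(i) is immediate from Theorem~\ref{thm:Ir}(\ref{Ir 1}): condition \eqref{Ir A Orl} is exactly \eqref{Ir A} for this $\vp$, which yields the pointwise estimate \eqref{Ir pointwise Orl} and the $\LP\to\wLPs$ boundedness; under the extra assumption $\Phi\in\bntwo$ one also obtains \eqref{Ir pointwise wOrl} together with the strong boundedness $\LP\to\LPs$ and $\wLP\to\wLPs$. The first assertion of part~(ii) is similarly the direct translation of the first assertion of Theorem~\ref{thm:Ir}(\ref{Ir 2}), and part~(iii) is a direct specialization of Theorem~\ref{thm:Ir}(\ref{Ir 3}) with $\psi(r)=\Psi^{-1}(\vp(r))=\Psi^{-1}(1/r^n)$.

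The one genuinely subtle point, and the main obstacle, is the second assertion of part~(ii). One cannot simply invoke the second assertion of Theorem~\ref{thm:Ir}(\ref{Ir 2}) because its auxiliary hypothesis \eqref{int vp tn-1} fails for $\vp(r)=1/r^n$: the integral $\int_0^r\vp(t)t^{n-1}\,dt=\int_0^r t^{-1}\,dt$ diverges, so the argument used to obtain \eqref{Ir A} in the theorem is unavailable. The corollary compensates by postulating \eqref{Ir A'' Orl} outright, which is precisely an upper bound for the second term on the left of \eqref{Ir A Orl}. Combining this explicit hypothesis with \eqref{Ir A' Orl}, supplied by the first assertion of part~(ii) applied to the assumed boundedness of $\Ir$, gives \eqref{Ir A Orl} by trivial addition with $A=A'+A''$. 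The rest of the argument is mechanical.
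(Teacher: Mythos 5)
Your proposal is correct and is essentially the paper's (implicit) argument: the corollary is stated as the specialization of Theorem~\ref{thm:Ir} to $\vp(r)=1/r^n$, for which $\vp\in\cGdec$ and \eqref{cG* dec} hold, and parts (i), the first assertion of (ii), and (iii) are direct translations. You also correctly identified the one point where blind specialization fails --- \eqref{int vp tn-1} is violated since $\int_0^r t^{-1}\,dt=\infty$ --- and your fix (using the postulated \eqref{Ir A'' Orl} together with \eqref{Ir A' Orl} and adding, with $A=A'+A''$) is exactly why the corollary's second assertion of (ii) carries the hypothesis \eqref{Ir A'' Orl} in place of \eqref{int vp tn-1}.
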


The above corollary is an extension of 
\cite[Theorem~3]{Deringoz-Guliyev-Nakai-Sawano-Shi2019Posi}.
See \cite{Nakai2001SCMJ},
for examples of $\Phi,\Psi\in\biPy$ 
which satisfy the assumption in Corollary~\ref{cor:Ir Orl}.
The boundedness of the usual fractional integral operator on Orlicz spaces
was given by O'Neil~\cite{ONeil1965}. 
See also \cite{Mizuta-Nakai-Ohno-Shimomura2010JMSJ} for the boundedness of $\Ir$ 
on Orlicz space $\LP(\Omega)$ with bounded domain $\Omega\subset\R^n$.

%=================================================
\begin{cor}\label{cor:Mr Orl}
%=================================================
Let $\Phi,\Psi\in\biPy$ and $\rho:(0,\infty)\to(0,\infty)$.
\begin{enumerate}
\item
If there exists a positive constant $A$ such that,
for all $r\in(0,\infty)$,
\begin{equation}\label{Mr A PPs}
 \left(\sup_{0<t\le r}\rho(t)\right)\Phi^{-1}(1/r^n)
 \le
 A\Psi^{-1}(1/r^n),
\end{equation}
then, for any positive constant $C_0$, there exists a positive constant $C_1$ such that, 
for all $f\in \LP(\R^n)$ with $f\not\equiv0$,
\begin{equation}\label{Mr pointwise Orl}
 \Psi\left(\frac{\Mr f(x)}{C_1\|f\|_{\LP}}\right) 
 \le
 \Phi\left(\frac{Mf(x)}{C_0\|f\|_{\LP}}\right),
 \quad x\in\R^n.
\end{equation}
Consequently, 
$\Mr$ is bounded from $\LP(\R^n)$ to $\wLPs(\R^n)$.
Moreover, if $\Phi\in\bntwo$, 
then, 
for all $f\in \wLP(\R^n)$ with $f\not\equiv0$,
\begin{equation}\label{Mr pointwise wOrl}
 \Psi\left(\frac{\Mr f(x)}{C_1\|f\|_{\wLP}}\right) 
 \le
 \Phi\left(\frac{Mf(x)}{C_0\|f\|_{\wLP}}\right),
 \quad x\in\R^n.
\end{equation}
Consequently, 
if $\Phi\in\bntwo$, then 
$\Mr$ is bounded from $\LP(\R^n)$ to $\LPs(\R^n)$
and from $\wLP(\R^n)$ to $\wLPs(\R^n)$.

\item\label{Mr 2 Orl}
If $\Mr$ is bounded from $\LP(\R^n)$ to $\wLPs(\R^n)$,
then \eqref{Mr A PPs} holds for some $A\in(0,\infty)$ and for all $r\in(0,\infty)$.

\item
In the part~\ref{Mr 2 Orl} the boundedness from $\LP(\R^n)$ to $\wLPs(\R^n)$
can be replaced by the boundedness from $\LP(\R^n)$ to $L^{(1,\psi)}(\R^n)$
with $\psi(r)=\Psi^{-1}(1/r^n)$.
\end{enumerate}
\end{cor}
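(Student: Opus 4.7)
The plan is to derive Corollary~\ref{cor:Mr Orl} directly from Theorem~\ref{thm:Mr} by specializing to $\vp(r)=1/r^n$. As recalled in Section~\ref{sec:intro}, for this choice of $\vp$ the Orlicz-Morrey space $\LPp(\R^n)$ coincides with the Orlicz space $\LP(\R^n)$ and the weak Orlicz-Morrey space $\wLPp(\R^n)$ coincides with $\wLP(\R^n)$, and similarly with $\Psi$ in place of $\Phi$. Under this identification, condition \eqref{Mr A} becomes \eqref{Mr A PPs}, the pointwise inequalities \eqref{Mr pointwise} and \eqref{Mr pointwise w} become \eqref{Mr pointwise Orl} and \eqref{Mr pointwise wOrl}, and the auxiliary target space $L^{(1,\Psi^{-1}(\vp))}(\R^n)$ of part~\ref{Mr 3} becomes $L^{(1,\psi)}(\R^n)$ with $\psi(r)=\Psi^{-1}(1/r^n)$.

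First I would verify that $\vp(r)=1/r^n$ belongs to $\cGdec$: it is strictly decreasing and $\vp(r)r^n\equiv 1$ is trivially both almost increasing and almost decreasing. Moreover $\dlim_{r\to\infty}\vp(r)=0$, so the first of the two alternatives in Theorem~\ref{thm:Mr}~\ref{Mr 1} is automatically satisfied and no assumption on $\Psi^{-1}/\Phi^{-1}$ is required. With this in hand, the sufficiency part of Corollary~\ref{cor:Mr Orl} follows at once from Theorem~\ref{thm:Mr}~\ref{Mr 1}: the hypothesis \eqref{Mr A PPs} yields the modular pointwise bounds \eqref{Mr pointwise Orl} and, under $\Phi\in\bntwo$, \eqref{Mr pointwise wOrl}, whence the asserted boundedness from $\LP(\R^n)$ to $\wLPs(\R^n)$, and further to $\LPs(\R^n)$, respectively from $\wLP(\R^n)$ to $\wLPs(\R^n)$.

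The necessity parts~\ref{Mr 2 Orl} and its $L^{(1,\psi)}$ variant follow by applying Theorem~\ref{thm:Mr}~\ref{Mr 2} and~\ref{Mr 3} to the same $\vp(r)=1/r^n$: any boundedness of $\Mr$ from $\LP(\R^n)$ to $\wLPs(\R^n)$ or to $L^{(1,\psi)}(\R^n)$ is, after the above identification, a boundedness from $\LPp(\R^n)$ to the corresponding Orlicz-Morrey target, which forces \eqref{Mr A} with $\vp(r)=1/r^n$, i.e.\ \eqref{Mr A PPs}.

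There is no substantive obstacle in this argument; the entire corollary is a direct translation of Theorem~\ref{thm:Mr} via the identification $\LPp=\LP$, $\wLPp=\wLP$ for $\vp(r)=1/r^n$, and the only verification needed is the elementary check that this specific $\vp$ satisfies the hypotheses of Theorem~\ref{thm:Mr}.
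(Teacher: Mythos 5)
Your proposal is correct and is precisely the paper's intended argument: the paper states this corollary without a separate proof, introducing it only with the phrase ``For the case $\vp(r)=1/r^n$, we have the following corollaries,'' so the proof is exactly the specialization of Theorem~\ref{thm:Mr} to $\vp(r)=1/r^n$, together with the checks you carry out (that this $\vp$ lies in $\cGdec$, that $\lim_{r\to\infty}\vp(r)=0$ disposes of the alternative hypothesis in part~(i), and that $\LPp=\LP$, $\wLPp=\wLP$ under this choice). Nothing further is needed.
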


The above corollary is an extension of 
\cite[Theorem~3.8]{Shi-Arai-Nakai2019Taiwan}.

%%%-------------------------------------------------------------------
For the case $\Phi(t)=t^p$, we have the following corollaries.

%=================================================
\begin{cor}\label{cor:M}
%=================================================
Let $1\le p<\infty$ and $\vp\in\cGdec$.
Then the Hardy-Littlewood maximal operator $M$ 
is bounded from $L^{(p,\vp)}(\R^n)$ to $\wL^{(p,\vp)}(\R^n)$.
Moreover, if $1<p<\infty$, then the operator $M$ is bounded from $L^{(p,\vp)}(\R^n)$ to itself
and from $\wL^{(p,\vp)}(\R^n)$ to itself.
\end{cor}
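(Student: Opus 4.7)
The plan is to derive Corollary~\ref{cor:M} as a direct specialization of Theorem~\ref{thm:M} to the Young function $\Phi(t)=t^p$. First I would observe that for every $1\le p<\infty$ the power function $\Phi(t)=t^p$ is convex on $[0,\infty)$, left continuous, with $\Phi(0)=0$ and $\lim_{t\to\infty}\Phi(t)=\infty$, so (setting $\Phi(\infty)=\infty$ by the convention of Section~\ref{sec:Young}) we have $\Phi\in\iPy\subset\biPy$. Under this identification, together with the notational convention recalled in Section~\ref{sec:main}, the Orlicz-Morrey space $\LPp(\R^n)$ coincides with $L^{(p,\vp)}(\R^n)$ and its weak counterpart $\wLPp(\R^n)$ coincides with $\wL^{(p,\vp)}(\R^n)$. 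The first assertion of Theorem~\ref{thm:M} then immediately gives the boundedness of $M$ from $L^{(p,\vp)}(\R^n)$ to $\wL^{(p,\vp)}(\R^n)$ for every $1\le p<\infty$.

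For the strong-type claim I would verify that $\Phi(t)=t^p\in\bntwo$ whenever $1<p<\infty$. Fix such a $p$ and choose any $k>1$ with $k^{p-1}\ge 2$, for instance $k=2^{1/(p-1)}$. Then for every $t>0$,
\[
\Phi(kt)=k^{p}t^{p}=k\cdot k^{p-1}\Phi(t)\ge 2k\,\Phi(t),
\]
which is precisely the $\nabla_2$-inequality~\eqref{nabla2}. Consequently the second assertion of Theorem~\ref{thm:M} applies and yields both the boundedness of $M$ from $L^{(p,\vp)}(\R^n)$ to itself and from $\wL^{(p,\vp)}(\R^n)$ to itself.

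Since the whole argument reduces to two elementary verifications, namely $t^p\in\iPy$ for $p\ge 1$ and $t^p\in\bntwo$ for $p>1$, followed by a direct invocation of Theorem~\ref{thm:M}, there is no genuine obstacle. The only point worth noting is that the endpoint $p=1$ falls outside $\bntwo$ (the inequality $\Phi(kt)\ge 2k\,\Phi(t)$ becomes $kt\ge 2kt$, which fails), which explains why in the corollary only the weak-type bound is asserted at that endpoint.
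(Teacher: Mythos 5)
Your proposal is correct and follows exactly the route the paper intends: the corollary is stated as the specialization of Theorem~\ref{thm:M} to $\Phi(t)=t^p$, and your two verifications ($t^p\in\iPy$ for $p\ge1$, and $t^p\in\bntwo$ for $p>1$ via $k=2^{1/(p-1)}$) are precisely what is needed. The remark about the failure of the $\nabla_2$-condition at $p=1$ correctly explains why only the weak-type bound is claimed there.
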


The above corollary is an extension of 
\cite[Theorem~1]{Nakai1994MathNachr} and \cite[Corollary~6.2]{Nakai2008Studia}.
The boundedness of the Hardy-Littlewood maximal operator on the classical Morrey space
was proven by Chiarenza and Frasca~\cite{Chiarenza-Frasca1987}.

%=================================================
\begin{cor}\label{cor:Ir}
%=================================================
Let $1\le p<q<\infty$, $\vp\in\cGdec$
and $\rho:(0,\infty)\to(0,\infty)$.
Assume that $\rho$ satisfies \eqref{int rho} and \eqref{sup rho}.
\begin{enumerate}
\item
Assume that $\vp$ satisfies \eqref{cG* dec}
and that 
there exists a positive constant $A$ such that,
for all $r\in(0,\infty)$,
\begin{equation}\label{Ir A pq}
 \int_0^r\frac{\rho(t)}{t}\,dt\,\vp(r)^{1/p} 
  +\int_r^{\infty}\frac{\rho(t)\vp(t)^{1/p}}{t}\,dt
 \le
 A\vp(r)^{1/q}. 
\end{equation}
Then there exists a positive constant $C$ such that, 
for all $f\in L^{(p,\vp)}(\R^n)$,
\begin{equation}\label{Ir pointwise Mor}
 |\Ir f(x)|
 \le
 C
 Mf(x)^{p/q}\left(\|f\|_{L^{(p,\vp)}}\right)^{1-p/q},
 \quad x\in\R^n.
\end{equation}
Consequently, $I_{\rho}$ is bounded 
from $L^{(p,\vp)}(\R^n)$ to $\wL^{(q,\vp)}(\R^n)$.
Moreover, if $p>1$, then,
for all $f\in \wL^{(p,\vp)}(\R^n)$,
\begin{equation}\label{Ir pointwise wMor}
 |\Ir f(x)|
 \le
 C
 Mf(x)^{p/q}\left(\|f\|_{\wL^{(p,\vp)}}\right)^{1-p/q},
 \quad x\in\R^n.
\end{equation}
Consequently, if $p>1$, then
$I_{\rho}$ is bounded 
from $L^{(p,\vp)}(\R^n)$ to $L^{(q,\vp)}(\R^n)$ by \eqref{Ir pointwise Mor}
and 
from $\wL^{(p,\vp)}(\R^n)$ to $\wL^{(q,\vp)}(\R^n)$ by \eqref{Ir pointwise wMor}.

\item\label{Ir 2 Mor}
If $\Ir$ is bounded 
from $L^{(p,\vp)}(\R^n)$ to $\wL^{(q,\vp)}(\R^n)$,
then
there exists a positive constant $A'$ such that,
for all $r\in(0,\infty)$,
\begin{equation}\label{Ir A' pq}
 \int_0^r\frac{\rho(t)}{t}\,dt\,\vp(r)^{1/p} 
 \le
 A'\vp(r)^{1/q}. 
\end{equation}
Moreover, 
under the assumption \eqref{int vp tn-1},
if $\Ir$ is bounded 
from $L^{(p,\vp)}(\R^n)$ to $\wL^{(q,\vp)}(\R^n)$,
then \eqref{Ir A pq} holds for some $A\in(0,\infty)$ and for all $r\in(0,\infty)$.

\item
In the part~\ref{Ir 2 Mor} the boundedness from $L^{(p,\vp)}(\R^n)$ to $\wL^{(q,\vp)}(\R^n)$
can be replaced by the boundedness from $L^{(p,\vp)}(\R^n)$ to $L^{(1,\vp^{1/q})}(\R^n)$.
\end{enumerate}
\end{cor}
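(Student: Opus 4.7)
The plan is to derive the corollary directly from Theorem~\ref{thm:Ir} by specializing to $\Phi(t)=t^p$ and $\Psi(t)=t^q$. With these choices $\Phi,\Psi\in\iPy$ (convexity is immediate since $p,q\ge 1$), and by the identification recorded after \eqref{gYoung} we have $\LPp(\R^n)=L^{(p,\vp)}(\R^n)$ and $\wLPp(\R^n)=\wL^{(p,\vp)}(\R^n)$, with the analogous equalities for $\Psi$. The generalized inverses reduce to $\Phi^{-1}(u)=u^{1/p}$ and $\Psi^{-1}(u)=u^{1/q}$, and $t^p\in\bntwo$ exactly when $p>1$. Substituting these expressions, condition \eqref{Ir A} turns into \eqref{Ir A pq}, \eqref{Ir A'} turns into \eqref{Ir A' pq}, and $L^{(1,\Psi^{-1}(\vp))}(\R^n)$ becomes $L^{(1,\vp^{1/q})}(\R^n)$, matching the formulations of parts~(i), (ii) and (iii).

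For the sufficiency in part~(i), the pointwise inequality \eqref{Ir pointwise} specializes to
\[
 \left(\frac{|\Ir f(x)|}{C_1\|f\|_{L^{(p,\vp)}}}\right)^{q}
 \le
 \left(\frac{Mf(x)}{C_0\|f\|_{L^{(p,\vp)}}}\right)^{p},
\]
so taking $q$-th roots and collapsing the constants into a single $C=C_1/C_0^{p/q}$ produces exactly \eqref{Ir pointwise Mor}. The same algebraic manipulation applied to \eqref{Ir pointwise w}, which is available once $p>1$ (so that $t^p\in\bntwo$), delivers \eqref{Ir pointwise wMor}. The corresponding norm boundedness statements are then direct consequences of Theorem~\ref{thm:Ir}~\ref{Ir 1} in the special case; for $p=1$ only the weak-type conclusion is obtained, since the $\bntwo$ hypothesis fails, exactly as stated.

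Parts~(ii) and (iii) are obtained by specializing Theorem~\ref{thm:Ir}~\ref{Ir 2} and \ref{Ir 3}, noting again that the hypothesis \eqref{int vp tn-1} is unchanged under the substitution. The whole argument is essentially a translation from the general Orlicz-Morrey setting to the power case; I do not anticipate any serious obstacle. The only point requiring a little care is to verify that the simplified pointwise form \eqref{Ir pointwise Mor}, with a single absorbed constant, is genuinely equivalent to the two-parameter Orlicz formulation \eqref{Ir pointwise}, and that this equivalence is compatible with the quasi-norm structure of the weak spaces so that no additional hypotheses are needed beyond those already stated.
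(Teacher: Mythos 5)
Your proposal is correct and follows exactly the route the paper intends: Corollary~\ref{cor:Ir} is stated as a direct specialization of Theorem~\ref{thm:Ir} with $\Phi(t)=t^p$, $\Psi(t)=t^q$, so that $\Phi^{-1}(u)=u^{1/p}$, $\Psi^{-1}(u)=u^{1/q}$, $t^p\in\bntwo$ precisely when $p>1$, and the pointwise Orlicz inequality \eqref{Ir pointwise} collapses to \eqref{Ir pointwise Mor} after taking $q$-th roots. Your final worry is unfounded: since $t\mapsto t^q$ is a strictly increasing bijection of $[0,\infty]$, the two pointwise formulations are trivially equivalent and no further hypotheses are needed.
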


The above corollary is an extension of 
\cite[Theorems~1.1, 1.2, 1.8 and 1.9]{Eridani-Gunawan-Nakai-Sawano2014MIA}. 
The condition \eqref{Ir A pq} was first given by Gunawan~\cite{Gunawan2003}.
The boundedness of the usual fractional integral operator on the classical Morrey space
was proven by Adams~\cite{Adams1975}.

%=================================================
\begin{cor}\label{cor:Mr}
%=================================================
Let $1\le p\le q<\infty, \vp\in\cGdec$
and $\rho:(0,\infty)\to(0,\infty)$.
\begin{enumerate}
\item
If there exists a positive constant $A$ such that,
for all $r\in(0,\infty)$,
\begin{equation}\label{Mr A pq}
 \left(\sup_{0<t\le r}\rho(t)\right)\vp(r)^{1/p}
 \le
 A\vp(r)^{1/q},
\end{equation}
then there exists a positive constant $C$ such that, 
for all $f\in L^{(p,\vp)}(\R^n)$,
\begin{equation}\label{Mr pointwise Mor}
 \Mr f(x)
 \le
 C
 Mf(x)^{p/q}\left(\|f\|_{L^{(p,\vp)}}\right)^{1-p/q},
 \quad x\in\R^n.
\end{equation}
Consequently, 
$\Mr$ is bounded from $L^{(p,\vp)}(\R^n)$ to $\wL^{(q,\vp)}(\R^n)$.
Moreover, 
if $p>1$, then,
for all $f\in \wL^{(p,\vp)}(\R^n)$,
\begin{equation}\label{Mr pointwise wMor}
 \Mr f(x)
 \le
 C
 Mf(x)^{p/q}\left(\|f\|_{\wL^{(p,\vp)}}\right)^{1-p/q},
 \quad x\in\R^n.
\end{equation}
Consequently, 
if $p>1$, then 
$\Mr$ is bounded from $L^{(p,\vp)}(\R^n)$ to $L^{(q,\vp)}(\R^n)$ by \eqref{Mr pointwise Mor}
and 
from $\wL^{(p,\vp)}(\R^n)$ to $\wL^{(q,\vp)}(\R^n)$ by \eqref{Mr pointwise wMor}.

\item\label{Mr 2 Mor} 
If $\Mr$ is bounded from $L^{(p,\vp)}(\R^n)$ to $\wL^{(q,\vp)}(\R^n)$, 
then \eqref{Mr A pq} holds for some $A\in(0,\infty)$ and for all $r\in(0,\infty)$.

\item
In the part~\ref{Mr 2 Mor} the boundedness from $L^{(p,\vp)}(\R^n)$ to $\wL^{(q,\vp)}(\R^n)$
can be replaced by the boundedness from $L^{(p,\vp)}(\R^n)$ to $L^{(1,\vp^{1/q})}(\R^n)$.
\end{enumerate}
\end{cor}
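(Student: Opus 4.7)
The plan is to obtain everything as a direct specialization of Theorem~\ref{thm:Mr} to $\Phi(t)=t^p$ and $\Psi(t)=t^q$. First I would verify that these are Young functions in $\iPy\subset\biPy$ with generalized inverses $\Phi^{-1}(u)=u^{1/p}$ and $\Psi^{-1}(u)=u^{1/q}$, so that the abstract condition \eqref{Mr A} reduces verbatim to \eqref{Mr A pq}. The structural hypothesis of Theorem~\ref{thm:Mr}\ref{Mr 1} requiring either $\lim_{r\to\infty}\vp(r)=0$ or the almost decrease of $\Psi^{-1}/\Phi^{-1}$ is automatic here, since $\Psi^{-1}(t)/\Phi^{-1}(t)=t^{1/q-1/p}$ is nonincreasing by the assumption $p\le q$.

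Next, I would translate the Orlicz-type pointwise inequality \eqref{Mr pointwise} into the power form \eqref{Mr pointwise Mor}. Substituting the two power functions into \eqref{Mr pointwise} gives
\begin{equation*}
 \frac{(M_\rho f(x))^q}{(C_1\|f\|_{L^{(p,\vp)}})^q}
 \le
 \frac{(Mf(x))^p}{(C_0\|f\|_{L^{(p,\vp)}})^p},
\end{equation*}
and taking $q$-th roots yields $M_\rho f(x)\le C\,Mf(x)^{p/q}\|f\|_{L^{(p,\vp)}}^{1-p/q}$ with $C=C_1 C_0^{-p/q}$, which is \eqref{Mr pointwise Mor}. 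The boundedness of $M_\rho$ from $L^{(p,\vp)}(\R^n)$ to $\wL^{(q,\vp)}(\R^n)$ is then just Theorem~\ref{thm:Mr}\ref{Mr 1} under the identification $L^{(\Phi,\vp)}=L^{(p,\vp)}$ and $\wL^{(\Psi,\vp)}=\wL^{(q,\vp)}$. For the stronger statements requiring $p>1$, I would note that $\Phi(t)=t^p\in\bntwo$ precisely when $p>1$ (any $k$ with $k^{p-1}\ge 2$ works in \eqref{nabla2}), so \eqref{Mr pointwise w} applies, yielding \eqref{Mr pointwise wMor} by the same $q$-th root computation, and the $\wL^{(p,\vp)}\to\wL^{(q,\vp)}$ and $L^{(p,\vp)}\to L^{(q,\vp)}$ boundedness follow from the same part of Theorem~\ref{thm:Mr}.

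Parts (ii) and (iii) of the corollary are immediate from Theorem~\ref{thm:Mr}\ref{Mr 2} and \ref{Mr 3} respectively, since the chosen specialization again identifies the function spaces and makes \eqref{Mr A} and \eqref{Mr A pq} the same statement. There is no genuine obstacle here: the proof is a pure translation of Theorem~\ref{thm:Mr}, and the only nontrivial bookkeeping is the identification of $\Phi^{-1}$ and $\Psi^{-1}$ with power functions together with the elementary verification of $\bntwo$ for $t^p$ when $p>1$.
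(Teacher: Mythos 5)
Your proof is correct and is exactly the route the paper intends: Corollary~\ref{cor:Mr} is stated without separate proof precisely because it is the specialization of Theorem~\ref{thm:Mr} to $\Phi(t)=t^p$, $\Psi(t)=t^q$, with $\Phi^{-1}(u)=u^{1/p}$, $\Psi^{-1}(u)=u^{1/q}$, the almost-decrease of $t^{1/q-1/p}$ for $p\le q$, and $t^p\in\bntwo$ iff $p>1$. All of your verifications, including the $q$-th root computation turning \eqref{Mr pointwise} into \eqref{Mr pointwise Mor}, are accurate.
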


%%%===================================================================
%%%===================================================================
\section{Properties of Young functions and Orlicz-Morrey spaces}\label{sec:lemmas}
%%%===================================================================
%%%===================================================================

In this section we state the properties of Young functions, 
Orlicz-Morrey and weak Orlicz Morrey spaces.
By the convexity, 
any Young function $\Phi$ is continuous on $[0,b(\Phi))$ 
and strictly increasing on $[a(\Phi),b(\Phi)]$.
Hence $\Phi$ is bijective from $[a(\Phi),b(\Phi)]$ to $[0,\Phi(b(\Phi))]$.

%%%===========================
\begin{defn}\label{defn:iPy}
%%%===========================
Let 
\begin{align*}
 \cY^{(1)}
 &=
 \left\{\Phi\in\iPy:b(\Phi)=\infty\right\}, 
\\
 \cY^{(2)}
 &=
 \left\{\Phi\in\iPy:b(\Phi)<\infty,\ \Phi(b(\Phi))=\infty\right\}, 
\\
 \cY^{(3)}
 &=
 \left\{\Phi\in\iPy:b(\Phi)<\infty,\ \Phi(b(\Phi))<\infty\right\}.
\end{align*}
\end{defn}

%%%===========================
\begin{rem}\label{rem:D2 n2}
%%%===========================
\begin{enumerate}
\item\label{Y1Y2}
If $\Phi\in\cY^{(1)}\cup\cY^{(2)}$, then
$\Phi(\Phi^{-1}(u))=u$ for all $u\in[0,\infty]$.

\item\label{Y3}
If $\Phi\in\cY^{(3)}$ and $0<\delta<1$, 
then there exists a Young function $\Psi\in\cY^{(2)}$ such that $b(\Phi)=b(\Psi)$ and
$$
 \Psi(\delta t) \le \Phi(t) \le \Psi(t) \quad\text{for all} \ t\in[0,\infty).
$$
To see this we only set $\Psi=\Phi+\Theta$, where
we choose $\Theta\in\cY^{(2)}$
such that $a(\Theta)=\delta\,b(\Phi)$ and $b(\Theta)=b(\Phi)$.

\item\label{bntwo} 
$\bntwo\subset\biPy$ (\cite[Lemma~1.2.3]{Kokilashvili-Krbec1991}).

\item\label{D2 n2 approx}
Let $\Phi\in\biPy$.
Then
$\Phi\in\bdtwo$ if and only if $\Phi\approx\Psi$ for some $\Psi\in\dtwo$,
and,  
$\Phi\in\bntwo$ if and only if $\Phi\approx\Psi$ for some $\Psi\in\ntwo$.

\item\label{Phi-1}
If $\Phi\in\iPy$,
then
$\Phi^{-1}$ satisfies the doubling condition by its concavity.

\item\label{Phi/tp inc}
Let $\Phi\in\iPy$.
Then $\Phi\in\ntwo$ if and only if 
$t\mapsto\dfrac{\Phi(t)}{t^p}$ is almost increasing for some $p\in(1,\infty)$.
\end{enumerate}
\end{rem}

%%%--------------------------------------------------

%%%===========================
%\begin{defn}\label{defn:cPhi}
%%%===========================
For a Young function $\Phi$, 
its complementary function is defined by
\begin{equation*}
\cPhi(t)= 
\begin{cases}
   \sup\{tu-\Phi(u):u\in[0,\infty)\}, & t\in[0,\infty), \\
   \infty, & t=\infty.
 \end{cases}
\end{equation*}
%\end{defn}
Then $\cPhi$ is also a Young function,
and $(\Phi,\cPhi)$ is called a complementary pair.
For example, 
if $\Phi(t)=t^p/p$, then $\cPhi(t)=t^{p'}/p'$
for $p,p'\in(1,\infty)$ and $1/p+1/p'=1$.
If $\Phi(t)=t$, then
\begin{equation*}
 \cPhi(t)=
\begin{cases}
 0, & t\in[0,1], \\
 \infty, & t\in(1,\infty].
\end{cases}
\end{equation*}

Let $(\Phi,\cPhi)$ be a complementary pair of functions in $\iPy$. 
Then the following inequality holds:
\begin{equation}\label{Phi cPhi r}
 t\le\Phi^{-1}(t) \cPhi^{-1}(t)\le2t
 \quad\text{for}\quad t\in[0,\infty],
\end{equation}
which is (1.3) in \cite{Torchinsky1976}.

%%%-------------------------------------------------------------------

Orlicz and weak Orlicz spaces on a measure space $(\Omega,\mu)$
defined by the following:
For $\Phi\in\biPy$, 
let $\LP(\Omega,\mu)$ and $\wLP(\Omega,\mu)$ 
be the set of all measurable functions $f$
such that the following functionals are finite, respectively:
\begin{align*}
  \|f\|_{\LP(\Omega,\mu)} &=
  \inf\left\{ \lambda>0: 
    \int_{\Omega} \Phi\left(\frac{|f(x)|}{\lambda}\right) d\mu(x)
      \le 1
      \right\}, 
\\
  \|f\|_{\wLP(\Omega,\mu)} &=
  \inf\left\{ \lambda>0: 
    \sup_{t\in(0,\infty)}\Phi(t)\,\mu\!\left(\frac{f}{\lambda}, t\right)
      \le 1
      \right\},
\end{align*}
where 
\begin{equation*} 
\mu(f,t)=\mu\big(\{x\in \Omega: |f(x)|>t\}\big).
\end{equation*}
Then 
$\|\cdot\|_{\LP(\Omega,\mu)}$ and $\|\cdot\|_{\wLP(\Omega,\mu)}$ are quasi norms
and thereby $\LP(\Omega,\mu)$ and $\wLP(\Omega,\mu)$ are quasi Banach spaces.
If $\Phi\in\iPy$, then $\|\cdot\|_{\LP(\Omega,\mu)}$ is a norm 
and thereby $\LP(\Omega,\mu)$ is a Banach space.
For $\Phi,\Psi\in\biPy$,
if $\Phi\approx\Psi$, then $\LP(\Omega,\mu)=\LPs(\Omega,\mu)$ 
and $\wLP(\Omega,\mu)=\wLPs(\Omega,\mu)$
with equivalent quasi norms, respectively.

%%%-------------------------------------------------------------------
For a measurable set $G\subset\Omega$, let $\chi_G$ be its characteristic function.
For $\Phi\in\iPy$, 
it is known that
\begin{equation}\label{chi Orlicz norm}
 \|\chi_G\|_{\wLP(\Omega,\mu)}
 =
 \|\chi_G\|_{\LP(\Omega,\mu)}
 =
 \frac1{\Phi^{-1}(1/\mu(G))},
 \quad\text{if} \ \mu(G)>0.
\end{equation}
Let $(\Phi,\cPhi)$ be a complementary pair of functions in $\iPy$. 
Then the following generalized H\"older's inequality holds:
\begin{equation}\label{g Holder}
 \int_{\Omega} |f(x)g(x)| \,d\mu(x) \le 2\|f\|_{\LP(\Omega,\mu)} \|g\|_{\LcP(\Omega,\mu)}.
% \quad\text{for}\quad f\in\LP(\Omega), \ g\in\LcP(\Omega).
\end{equation}

For a ball $B=B(a,r)\subset\R^n$, let $\mu_B=dx/(|B|\vp(r))$.
Then we have the following relations.
\begin{equation}\label{LP B norm}
 \|f\|_{\Phi,\vp,B}=\|f\|_{\LP(B,\,\mu_B)},
\quad
 \|f\|_{\Phi,\vp,B,\weak}=\|f\|_{\wLP(B,\,\mu_B)}.
\end{equation}
By the relations \eqref{chi Orlicz norm} and \eqref{LP B norm}, we have
\begin{equation}\label{chi norm B}
 \|\chi_B\|_{\Phi,\vp,B,\weak}
 =
 \|\chi_B\|_{\Phi,\vp,B}
 =
 \frac1{\Phi^{-1}(1/\mu_B(B))}
 =
 \frac1{\Phi^{-1}(\vp(r))}.
\end{equation}
For functions $f,g$ on $\R^n$, 
by \eqref{g Holder} and \eqref{LP B norm}, we have
\begin{equation}\label{g Holder B}
 \frac1{|B|\vp(r)} \int_{B} |f(x)g(x)| \,dx \le 2\|f\|_{\Phi,\vp,B} \|g\|_{\cPhi,\vp,B}.
\end{equation}

%%%-------------------------------------------------------------------

%=============================
\begin{lem}\label{lem:chi norm 2}
%=============================
Let $\Phi\in\iPy$ and $\vp \in \cGdec$.
Then
there exists a constant $C\ge1$ such that,
for any ball $B=B(a,r)$, 
\begin{equation}\label{chi norm}
 \frac1{\Phi^{-1}(\vp(r))}
 \le
 \|\chi_B\|_{\wLPp}
 \le
 \|\chi_B\|_{\LPp}
 \le
 \frac{C}{\Phi^{-1}(\vp(r))}.
\end{equation}
\end{lem}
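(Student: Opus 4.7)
The plan is to treat each of the three inequalities in \eqref{chi norm} separately, since the left and middle inequalities are almost immediate from identities already established in Section~\ref{sec:lemmas}, while the rightmost requires a short case analysis based on the $\cGdec$ properties of $\vp$.

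For the middle inequality $\|\chi_B\|_{\wLPp}\le\|\chi_B\|_{\LPp}$, I would use the identifications \eqref{LP B norm}: on any fixed ball $B_0$ with measure $\mu_{B_0}$, one has the classical pointwise relation $\|\cdot\|_{\wLP(B_0,\mu_{B_0})}\le\|\cdot\|_{\LP(B_0,\mu_{B_0})}$, which follows from Chebyshev applied to the two defining modular conditions. Taking the supremum over $B_0$ preserves the inequality. For the left inequality $1/\Phi^{-1}(\vp(r))\le\|\chi_B\|_{\wLPp}$, I would simply take $B_0=B$ in the supremum defining $\|\chi_B\|_{\wLPp}$ and invoke \eqref{chi norm B}, which gives $\|\chi_B\|_{\Phi,\vp,B,\weak}=1/\Phi^{-1}(\vp(r))$ exactly.

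The main step is the right inequality. Fix an arbitrary ball $B'=B(a',r')$ and use \eqref{LP B norm} together with \eqref{chi Orlicz norm} applied on the measure space $(B',\mu_{B'})$ to obtain, for $B\cap B'\ne\emptyset$,
\[
 \|\chi_B\|_{\Phi,\vp,B'}
 =\|\chi_{B\cap B'}\|_{\LP(B',\mu_{B'})}
 =\frac{1}{\Phi^{-1}\!\left(|B'|\vp(r')/|B\cap B'|\right)},
\]
while the quantity vanishes when $B\cap B'=\emptyset$. Since $\Phi^{-1}$ is increasing and doubling by Remark~\ref{rem:D2 n2}\,\ref{Phi-1}, it suffices to prove the uniform lower bound
\[
 \frac{|B'|\vp(r')}{|B\cap B'|}\gs\vp(r),
\]
with a constant depending only on the $\cGdec$ constant of $\vp$. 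I would split into two subcases. If $r'\le r$, then $|B\cap B'|\le|B'|$ and the almost-decreasing property of $\vp$ gives $\vp(r')|B'|/|B\cap B'|\ge\vp(r')\gs\vp(r)$. If $r'>r$, then $|B\cap B'|\le|B|\sim r^n$ and the almost-increasing property of $t\mapsto\vp(t)t^n$ gives $\vp(r')|B'|/|B\cap B'|\gs\vp(r')(r')^n/r^n\gs\vp(r)$. Doubling of $\Phi^{-1}$ then converts this to the desired bound on $1/\Phi^{-1}(\cdot)$, and taking the supremum over $B'$ finishes the proof.

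There is no deep obstacle; the only minor point requiring care is the degenerate case $B'\subseteq B$, where $|B\cap B'|=|B'|$ and the argument of $\Phi^{-1}$ reduces to $\vp(r')$: here one needs $\Phi^{-1}(\vp(r'))\in(0,\infty)$, which is guaranteed because $\Phi\in\iPy$ makes $\Phi^{-1}$ finite, positive and increasing on $(0,\infty)$. Everything else is bookkeeping, and the constant $C$ in \eqref{chi norm} is tracked through the $\cGdec$ constant of $\vp$ and the doubling constant of $\Phi^{-1}$.
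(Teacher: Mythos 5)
Your proof is correct. The first two inequalities are handled exactly as in the paper: the leftmost follows by taking $B'=B$ in the supremum and invoking \eqref{chi norm B}, and the middle one by the Chebyshev-type domination of the weak modular by the strong modular on each ball. For the rightmost inequality, however, the paper does not give an argument at all but simply cites Lemma~4.1 of Shi--Arai--Nakai (Banach J.\ Math.\ Anal.\ 2021), whereas you supply a complete self-contained proof: the exact formula $\|\chi_B\|_{\Phi,\vp,B'}=1/\Phi^{-1}\bigl(|B'|\vp(r')/|B\cap B'|\bigr)$ via \eqref{LP B norm} and \eqref{chi Orlicz norm}, the two-case lower bound $|B'|\vp(r')/|B\cap B'|\gs\vp(r)$ using the almost-decreasing property of $\vp$ when $r'\le r$ and the almost-increasing property of $\vp(t)t^n$ when $r'>r$, and finally the concavity/doubling of $\Phi^{-1}$ to absorb the constant. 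All three steps check out, so your version has the advantage of making the lemma independent of the external reference, at the cost of a page of bookkeeping that the paper avoids by citation.
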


\begin{proof}
By \eqref{chi norm B} we have
\begin{equation*}
 \frac1{\Phi^{-1}(\vp(r))}
 =
 \|\chi_B\|_{\Phi,\vp,B,\weak}
 \le
 \|\chi_B\|_{\wLPp}
 \le
 \|\chi_B\|_{\LPp}.
\end{equation*}
The last inequality in \eqref{chi norm} has been proven 
in \cite[Lemma~4.1]{Shi-Arai-Nakai2020Banach}. 
\end{proof}

%=============================
\begin{cor}\label{cor:chi norm}
%=============================
Let $\vp\in\cGdec$.
Then
there exists a constant $C\ge1$ such that,
for any ball $B=B(a,r)$, 
\begin{equation*}
 \frac1{\vp(r)}
 \le
 \|\chi_B\|_{\wL^{(1,\vp)}}
 \le
 \|\chi_B\|_{L^{(1,\vp)}}
 \le
 \frac{C}{\vp(r)}.
\end{equation*}
\end{cor}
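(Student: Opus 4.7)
The plan is to derive Corollary~\ref{cor:chi norm} as the immediate specialization of Lemma~\ref{lem:chi norm 2} to the linear Young function $\Phi(t)=t$. First I would verify that $\Phi(t)=t$ belongs to $\iPy$: it is increasing, left continuous, convex on $[0,\infty)$, vanishes at $0$, and tends to $\infty$ as $t\to\infty$, so by the convention stated just after the definition of $\biP$ we have $\Phi\in\iPy$ with $a(\Phi)=0$ and $b(\Phi)=\infty$. In this case, $\Phi$ is its own generalized inverse, that is, $\Phi^{-1}(u)=u$ for all $u\in[0,\infty]$.

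Next I would match definitions. With $\Phi(t)=t$, the modular
\[
\fint_B \Phi\!\left(\frac{|f(x)|}{\lambda}\right)dx = \frac{1}{\lambda}\,\fint_B |f(x)|\,dx,
\]
so $\|\cdot\|_{\Phi,\vp,B}$ reduces to the Morrey local seminorm that defines $L^{(1,\vp)}(\R^n)$; an identical reduction applies for the weak version using the identity $\sup_{t>0} t\,m(B,f/\lambda,t)/(|B|\vp(r))$. Hence $\LPp(\R^n)=L^{(1,\vp)}(\R^n)$ and $\wLPp(\R^n)=\wL^{(1,\vp)}(\R^n)$ with equal quasi-norms.

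Plugging $\Phi^{-1}(\vp(r))=\vp(r)$ into the chain \eqref{chi norm} of Lemma~\ref{lem:chi norm 2} gives
\[
\frac{1}{\vp(r)}\le \|\chi_B\|_{\wL^{(1,\vp)}}\le \|\chi_B\|_{L^{(1,\vp)}} \le \frac{C}{\vp(r)},
\]
which is exactly the statement of the corollary. Since Lemma~\ref{lem:chi norm 2} is already proved, there is essentially no obstacle here; the only point requiring care is the bookkeeping that $\Phi(t)=t$ satisfies the qualifying hypotheses of $\iPy$, which is trivial.
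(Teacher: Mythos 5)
Your proposal is correct and matches the paper's intent exactly: the paper states Corollary~\ref{cor:chi norm} without proof as the immediate specialization of Lemma~\ref{lem:chi norm 2} to $\Phi(t)=t$, for which $\Phi^{-1}(u)=u$ and $L^{(\Phi,\vp)}=L^{(1,\vp)}$ by the paper's own notational convention. Your verification that $\Phi(t)=t$ lies in $\iPy$ and that the norms coincide is the right (and only) bookkeeping needed.
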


The following relation between Orlicz-Morrey spaces and generalized Morrey spaces is known:
%=================================================
\begin{lem}[{\cite[Corollary~4.7]{Nakai2008Studia}}]\label{lem:fint_B f}
%=================================================
Let $\Phi\in\iPy$ and $\vp:(0,\infty)\to(0,\infty)$.
Then there exists a positive constant $C$ such that, 
for all $f\in\LPp(\R^n)$ and for all balls $B=B(a,r)$,
\begin{equation}\label{fint PpB}
 \fint_{B}|f(x)|\,dx
 \le
 2\Phi^{-1}(\vp(r)) \|f\|_{\Phi,\vp,B}.
\end{equation}
Consequently, 
%the Orlicz-Morrey space
$\LPp(\R^n)$ continuously embeds in $L^{(1,\Phi^{-1}(\vp))}(\R^n)$.
\end{lem}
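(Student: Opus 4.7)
The plan is to derive the bound from the generalized Hölder inequality \eqref{g Holder B} applied on the ball $B$ with the second factor taken to be the characteristic function $\chi_B$, and then to convert the complementary-function side into a $\Phi^{-1}$ expression via O'Neil's inequality \eqref{Phi cPhi r}.

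Let $\cPhi$ denote the complementary Young function of $\Phi$; since $\Phi\in\iPy$, we have $\cPhi\in\iPy$ as well. For any ball $B=B(a,r)$, I would apply \eqref{g Holder B} with $g=\chi_B$ to obtain
\begin{equation*}
 \frac{1}{|B|\vp(r)}\int_{B}|f(x)|\,dx
 \;\le\; 2\,\|f\|_{\Phi,\vp,B}\,\|\chi_B\|_{\cPhi,\vp,B}.
\end{equation*}
By \eqref{chi norm B} applied to $\cPhi$ in place of $\Phi$, the last factor equals $1/\cPhi^{-1}(\vp(r))$. Multiplying through by $\vp(r)$ and dividing by $|B|$ gives
\begin{equation*}
 \fint_{B}|f(x)|\,dx
 \;\le\; \frac{2\,\vp(r)}{\cPhi^{-1}(\vp(r))}\,\|f\|_{\Phi,\vp,B}.
\end{equation*}

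Next, I would invoke the O'Neil-type inequality \eqref{Phi cPhi r}, namely $\vp(r)\le \Phi^{-1}(\vp(r))\,\cPhi^{-1}(\vp(r))$, which yields $\vp(r)/\cPhi^{-1}(\vp(r))\le \Phi^{-1}(\vp(r))$. Substituting this bound into the previous display produces the claimed inequality \eqref{fint PpB}.

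For the consequence, recall that for the Young function $\Theta(t)=t$ one has $\Theta^{-1}(u)=u$, so that with $\psi(r):=\Phi^{-1}(\vp(r))$ the local norm evaluates to $\|f\|_{1,\psi,B}=\psi(r)^{-1}\fint_B|f|\,dx$. Dividing \eqref{fint PpB} by $\Phi^{-1}(\vp(r))$ and then taking the supremum over all balls $B$ gives $\|f\|_{L^{(1,\Phi^{-1}(\vp))}}\le 2\|f\|_{\LPp}$, which is the continuous embedding. There is no real obstacle in this argument; it is a packaging of Hölder's inequality plus \eqref{Phi cPhi r}. The only point worth noting is that $\cPhi$ may take the value $\infty$ (e.g.\ when $\Phi(t)=t$), but this is harmless since \eqref{g Holder B} and \eqref{chi norm B} are formulated within the class $\iPy$ allowing $b(\cPhi)<\infty$.
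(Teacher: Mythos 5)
Your proof is correct and follows exactly the route the paper indicates: it remarks after the lemma that \eqref{fint PpB} follows from \eqref{Phi cPhi r}, \eqref{chi norm B} and \eqref{g Holder B}, which are precisely the three ingredients you combine (H\"older with $g=\chi_B$, evaluation of $\|\chi_B\|_{\cPhi,\vp,B}$, and O'Neil's inequality). Nothing further is needed.
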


In the above lemma we do not need to assume $\vp\in\cGdec$, while it was assumed in \cite[Corollary~4.7]{Nakai2008Studia}.
Actually, we can prove \eqref{fint PpB} by \eqref{Phi cPhi r}, \eqref{chi norm B} and \eqref{g Holder B}. 

%=================================================
\begin{rem}\label{rem:wLPp}
%=================================================
If $\Phi\in\iPy$ and $\vp\in\cGdec$, 
then $\Phi^{-1}(\vp)\in\cGdec$,
see \cite[Corollary~4.7]{Nakai2008Studia}.
\end{rem}

The following lemma shows that any weak Orlicz-Morrey space continuously embeds 
in a generalized Morrey space,
whenever $\Phi\in\bntwo$,
and it is an extension of \cite[Theorem~3.4]{Nakai2002Lund} which treats weak Orlicz spaces.
Note that if $\Phi\in\bntwo$, then $\Phi\approx\Phi_1$ for some $\Phi_1\in\ntwo$,
see Remark~\ref{rem:D2 n2}~\ref{bntwo} and \ref{D2 n2 approx}.
In this case $\|f\|_{\Phi,\vp,B,\weak}\sim\|f\|_{\Phi_1,\vp,B,\weak}$ and $\Phi^{-1}\sim\Phi_1^{-1}$.

%=================================================
\begin{lem}\label{lem:fint_B wLPp}
%=================================================
Let $\Phi\in\nabla_2$ and $\vp:(0,\infty)\to(0,\infty)$.
Then there exists a positive constant $C$ such that, 
for all $f\in\wLPp(\R^n)$ and for all balls $B=B(a,r)$,
\begin{equation*}
 \fint_{B}|f(x)|\,dx
 \le
 C\Phi^{-1}(\vp(r)) \|f\|_{\Phi,\vp,B,\weak}.
\end{equation*}
Consequently, 
$\wLPp(\R^n)$ continuously embeds in $L^{(1,\Phi^{-1}(\vp))}(\R^n)$.
\end{lem}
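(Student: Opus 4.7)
The strategy is to combine the layer-cake formula with the power-type lower bound on $\Phi$ supplied by the $\nabla_2$ condition, and to choose the cutoff level so that the trivial bound and the weak-type bound balance.

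Set $\lambda=\|f\|_{\Phi,\vp,B,\weak}$. If $\lambda=0$ then $f\equiv0$ a.e.\ on $B$ and the inequality is trivial, so assume $\lambda>0$. From the left continuity of $\Phi$ and the definition of $\lambda$, the infimum is attained in the sense that
\[
 \sup_{t>0}\frac{\Phi(t)\,m(B,f/\lambda,t)}{|B|\vp(r)}\le1,
\]
which, after relabelling, gives $m(B,f,s)\le |B|\vp(r)/\Phi(s/\lambda)$ for every $s>0$. Write
\[
 \int_B |f(x)|\,dx
 =\int_0^\infty m(B,f,s)\,ds
 =\int_0^{s_0}m(B,f,s)\,ds+\int_{s_0}^\infty m(B,f,s)\,ds
\]
with $s_0:=\lambda\Phi^{-1}(\vp(r))$. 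The first integral is at most $|B|s_0=|B|\lambda\Phi^{-1}(\vp(r))$ by the trivial bound $m(B,f,s)\le|B|$, and the second, after the substitution $u=s/\lambda$, is at most
\[
 |B|\vp(r)\lambda\int_{\Phi^{-1}(\vp(r))}^{\infty}\frac{du}{\Phi(u)}.
\]

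Now I invoke Remark~\ref{rem:D2 n2}~\ref{Phi/tp inc}: since $\Phi\in\ntwo$, there exist $p>1$ and a constant $c>0$ such that $\Phi(u)/u^p\ge c^{-1}\Phi(v)/v^p$ whenever $v\le u$. With $v=\Phi^{-1}(\vp(r))=:w$ this gives $1/\Phi(u)\le c(u/w)^{-p}/\Phi(w)$ on $[w,\infty)$, and an easy power computation yields
\[
 \int_w^{\infty}\frac{du}{\Phi(u)}\le\frac{c}{p-1}\cdot\frac{w}{\Phi(w)}.
\]
A brief check shows that $\Phi\in\ntwo$ is incompatible with $b(\Phi)<\infty$ and $\Phi(b(\Phi))<\infty$ (otherwise iterating \eqref{nabla2} forces $\Phi\equiv0$), hence $\Phi\in\cY^{(1)}\cup\cY^{(2)}$ and, by Remark~\ref{rem:D2 n2}~\ref{Y1Y2}, $\Phi(w)=\vp(r)$. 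The second integral is therefore majorised by $\frac{c}{p-1}|B|\lambda\Phi^{-1}(\vp(r))$, and summing the two contributions and dividing by $|B|$ yields
\[
 \fint_B|f(x)|\,dx\le\Bigl(1+\tfrac{c}{p-1}\Bigr)\Phi^{-1}(\vp(r))\,\|f\|_{\Phi,\vp,B,\weak}.
\]

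Taking the supremum over balls $B=B(a,r)$ and combining with Corollary~\ref{cor:chi norm} (applied to $\psi=\Phi^{-1}(\vp)$, which belongs to $\cGdec$ by Remark~\ref{rem:wLPp}) yields the continuous embedding $\wLPp(\R^n)\hookrightarrow L^{(1,\Phi^{-1}(\vp))}(\R^n)$. The main point requiring care is the distinction between $\Phi(\Phi^{-1}(\cdot))$ and the identity, which is why I first reduce to $\Phi\in\cY^{(1)}\cup\cY^{(2)}$; the rest is the standard truncation-at-the-correct-level argument familiar from the proof that $\wL^p\subset L^1_{\loc}$ for $p>1$.
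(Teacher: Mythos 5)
Your overall strategy --- the layer-cake formula split at $s_0=\lambda\Phi^{-1}(\vp(r))$, the trivial bound below $s_0$, and the weak-type bound combined with the almost-increasing function $t\mapsto\Phi(t)/t^p$ above $s_0$ --- is exactly the paper's Case 1, and that part of your argument is correct. The gap is in your ``brief check'' that $\Phi\in\ntwo$ is incompatible with $b(\Phi)<\infty$ and $\Phi(b(\Phi))<\infty$: this claim is false. Iterating \eqref{nabla2} only gives $\Phi(t)\le(2k)^{-j}\Phi(k^jt)$, and as soon as $k^jt>b(\Phi)$ the right-hand side equals $\infty$, so the iteration produces no contradiction. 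Concretely, $\Phi(t)=t^2$ for $t\in[0,1]$ and $\Phi(t)=\infty$ for $t>1$ is a Young function in $\cY^{(3)}$ that satisfies \eqref{nabla2} with $k=4$. Hence the case $\Phi\in\cY^{(3)}$ genuinely occurs, and in that case $\Phi(\Phi^{-1}(u))=\Phi(b(\Phi))<u$ whenever $u>\Phi(b(\Phi))$ --- which is precisely the identity $\Phi(w)=\vp(r)$ that your estimate of the tail integral relies on.

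The error is repairable, but a repair must be written out. If $\vp(r)>\Phi(b(\Phi))$, then $w=\Phi^{-1}(\vp(r))=b(\Phi)$ and $\Phi(u)=\infty$ for $u>w$, so $m(B,f,s)=0$ for $s>s_0$ and the tail integral vanishes; the desired bound then follows from the trivial part alone. If instead $\vp(r)\le\Phi(b(\Phi))$, the identity $\Phi(w)=\vp(r)$ does hold even for $\Phi\in\cY^{(3)}$, by the continuity and strict monotonicity of $\Phi$ on $[a(\Phi),b(\Phi)]$, and your computation goes through. The paper takes a different route for $\cY^{(3)}$: it sandwiches $\Phi_1(\delta t)\le\Phi(t)\le\Phi_1(t)$ with $\Phi_1\in\cY^{(2)}$ as in Remark~\ref{rem:D2 n2}~\ref{Y3}, applies the $\cY^{(1)}\cup\cY^{(2)}$ case to $\Phi_1$, and lets $\delta\to1$. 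Either fix works, but as written your proof dismisses a case that exists by means of a false assertion at a load-bearing point.
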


\begin{proof}
Case 1: $\Phi\in\cY^{(1)} \cup \cY^{(2)}$.
We may assume that $\|f\|_{\Phi,\vp,B,\weak}=1$.
If $b(\Phi)<\infty$ and $t\in[b(\Phi),\infty)$,
then $m(f,t)=0$.
For any ball $B$ of radius $r$, let $t_0=\Phi^{-1}(\vp(r))$.
Then $\Phi(t_0)=\vp(r)\in(0,\infty)$ by Remark~\ref{rem:D2 n2}~\ref{Y1Y2}.
That is, $t_0\in(a(\Phi),b(\Phi))$.
Take $p\in(1,\infty)$ such that $\Phi(t)/t^p$ is almost increasing,
see Remark~\ref{rem:D2 n2}~\ref{Phi/tp inc}.
Then
\begin{align*}
 \int_B|f(x)|\,dx
 &=
 \int_0^{t_0} m(B,f,t) \,dt
 +\int_{t_0}^{b(\Phi)} m(B,f,t) \,dt
\\
 &\le
 {t_0}|B|
 + \int_{t_0}^{b(\Phi)} \frac{ \vp(r) |B| }{ \Phi(t) } \,dt
\\
 &=
 {t_0}|B|
 + \vp(r)|B| \int_{t_0}^{b(\Phi)} \frac{ t^p }{ \Phi(t) } t^{-p} \,dt
\\
 &\ls
 {t_0}|B|
 + \vp(r)|B| \frac{ {t_0}^p }{ \Phi({t_0}) } \int_{t_0}^{b(\Phi)} t^{-p} \,dt
\\
 &=
 {t_0}|B|
 +\frac{{t_0}{|B|\vp(r)} }{(p-1)\Phi({t_0})} 
\\
 &=
 {t_0}|B|
 +\frac{{t_0}|B|}{(p-1)}. 
\end{align*}
This shows the conclusion.

Case 2: $\Phi\in\cY^{(3)}$.
In this case, for any $\delta\in(0,1)$, 
there exists $\Phi_1\in\cY^{(2)}$ such that 
\begin{equation*}
 \Phi_1(\delta t)\le\Phi(t)\le\Phi_1(t), \quad t\in[0,\infty],
\end{equation*}
see Remark~\ref{rem:D2 n2}~\ref{Y3}.
It follows that
\begin{equation*}
 \delta\Phi^{-1}(u)\le\Phi_1^{-1}(u)\le\Phi^{-1}(u) 
 \quad\text{and}\quad
 \delta\|f\|_{\wL^{(\Phi_1,\vp)}}\le\|f\|_{\wLPp}\le\|f\|_{\wL^{(\Phi_1,\vp)}}
\end{equation*}
By Case 1 we have
\begin{equation*}
 \fint_{B}|f(x)|\,dx
 \le
 C\Phi_1^{-1}(\vp(r)) \|f\|_{\wL^{(\Phi_1,\vp)}}
 \le
 C\Phi^{-1}(\vp(r)) \|f\|_{\wL^{(\Phi,\vp)}}/\delta.
\end{equation*}
Letting $\delta\to1$, we have the conclusion.
\end{proof}

%=================================================
\begin{lem}[{\cite[Lemma~9.4]{Nakai2008Studia}}]\label{lem:Nakai2008Studia9.4}
%=================================================
Let $\Phi\in\iPy$ and $\vp \in \cGdec$, and let $f \in \LPp(\R^n)$. 
For a ball $B=B(a,r)$, if $\supp f \cap 2B = \emptyset$,
then
\begin{equation*}
 Mf(x) \le C\Phi^{-1}(\vp(r)) \|f\|_{\LPp}
 \quad\text{for}\quad x \in B, 
\end{equation*}
where the constant $C$ depends only on $\Phi$ and $\vp$.
\end{lem}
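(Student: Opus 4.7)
The plan is to bound $Mf(x)$ for $x\in B$ by reducing to averages over balls of radius comparable to $r$ (or larger), then applying Lemma~\ref{lem:fint_B f} together with the almost-decreasing property of $\vp$ and the doubling property of $\Phi^{-1}$.

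First I fix $x\in B=B(a,r)$ and take an arbitrary ball $B'=B(z,s)$ with $x\in B'$. If $B'\cap\supp f=\emptyset$, the average $\fint_{B'}|f|$ equals zero and contributes nothing to $Mf(x)$. Otherwise, since $\supp f\subset(2B)^c$, there exists $y\in B'$ with $|y-a|\ge 2r$, and this geometric constraint combined with $|z-x|<s$ and $|x-a|<r$ forces $s>r/2$ (a short triangle-inequality computation). Thus only balls $B'$ of radius exceeding $r/2$ matter, and for each such ball the inclusion $B'\subset B(x,2s)$ gives
\begin{equation*}
 \fint_{B'}|f(y)|\,dy\le \frac{|B(x,2s)|}{|B'|}\fint_{B(x,2s)}|f(y)|\,dy=2^n\fint_{B(x,2s)}|f(y)|\,dy.
\end{equation*}

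Next I apply Lemma~\ref{lem:fint_B f} to the ball $B(x,2s)$, whose radius $2s$ exceeds $r$:
\begin{equation*}
 \fint_{B(x,2s)}|f(y)|\,dy\le 2\Phi^{-1}(\vp(2s))\,\|f\|_{\LPp}.
\end{equation*}
Because $\vp\in\cGdec$ is almost decreasing and $2s>r$, there is a constant $C$ with $\vp(2s)\le C\vp(r)$. Since $\Phi\in\iPy$, its generalized inverse $\Phi^{-1}$ is concave on $[0,\infty)$ and hence satisfies the doubling condition (Remark~\ref{rem:D2 n2}\ref{Phi-1}), so $\Phi^{-1}(C\vp(r))\lesssim\Phi^{-1}(\vp(r))$. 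Combining these estimates yields $\fint_{B'}|f|\lesssim\Phi^{-1}(\vp(r))\,\|f\|_{\LPp}$, uniformly in $B'$, and taking the supremum over all $B'\ni x$ gives the desired bound.

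The only delicate point is the geometric step showing $s>r/2$, which relies precisely on the $2$-dilation in the hypothesis $\supp f\cap 2B=\emptyset$; everything afterwards is a routine packaging of previously stated lemmas on $\LPp$ and the doubling of $\Phi^{-1}$ and $\vp$.
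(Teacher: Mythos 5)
Your proposal is correct and follows essentially the same route the paper uses: the paper cites this lemma from \cite[Lemma~9.4]{Nakai2008Studia} rather than reproving it, but its proof of the companion weak-space statement (Lemma~\ref{lem:Mf wLPp}) is exactly your argument --- balls $B'\ni x$ of radius $s\le r/2$ miss $\supp f$, and for $s>r/2$ one applies the embedding Lemma~\ref{lem:fint_B f} together with the almost-decreasing and doubling properties of $r\mapsto\Phi^{-1}(\vp(r))$. The only cosmetic difference is that the paper applies the embedding lemma to $B'$ directly instead of first enlarging to $B(x,2s)$, which spares the factor $2^n$ but changes nothing essential.
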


%=================================================
\begin{lem}\label{lem:Mf wLPp}
%=================================================

Let $\Phi\in\ntwo$ and $\vp\in\cGdec$, and let $f \in \LPp(\R^n)$.
For a ball $B=B(a,r)$, if $\supp f \cap 2B = \emptyset$,
then
\begin{equation*}
 Mf(x) \le C\Phi^{-1}\left( \vp(r) \right)\|f\|_{\wLPp}
 \quad\text{for}\quad x \in B, 
\end{equation*}
where the constant $C$ depends only on $\Phi$ and $\vp$.
\end{lem}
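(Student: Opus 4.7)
The plan is to follow the same geometric reduction as in Lemma~\ref{lem:Nakai2008Studia9.4}, but to replace the Hölder-type estimate with the weak-norm version provided by Lemma~\ref{lem:fint_B wLPp}. Fix $x\in B=B(a,r)$ and let $B'=B(y,s)$ be any ball containing $x$ for which $\fint_{B'}|f|>0$. Since $\supp f\cap 2B=\emptyset$, there exists $z\in B'$ with $|z-a|\ge 2r$, and combined with $|x-a|<r$ this gives $|z-x|>r$. As both $x,z\in B'$, we conclude $s>r/2$.

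Next I would apply Lemma~\ref{lem:fint_B wLPp} to the ball $B'$: since $\Phi\in\nabla_2$,
\begin{equation*}
\fint_{B'}|f(y)|\,dy \le C\,\Phi^{-1}(\vp(s))\,\|f\|_{\Phi,\vp,B',\weak}\le C\,\Phi^{-1}(\vp(s))\,\|f\|_{\wLPp},
\end{equation*}
where the last inequality is immediate from the definition of $\|\cdot\|_{\wLPp}$ as a supremum.

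It then remains to replace $\Phi^{-1}(\vp(s))$ by $\Phi^{-1}(\vp(r))$ uniformly in $s>r/2$. Since $\vp\in\cGdec$, both the almost-decreasing property and the doubling property (noted right after Definition~\ref{defn:cGdec}) yield $\vp(s)\le C\vp(r/2)\le C'\vp(r)$ for $s>r/2$. Combining this with the doubling of $\Phi^{-1}$ from Remark~\ref{rem:D2 n2}~\ref{Phi-1}, one obtains $\Phi^{-1}(\vp(s))\le C''\Phi^{-1}(\vp(r))$. Taking the supremum over all admissible $B'\ni x$ then gives $Mf(x)\le C\,\Phi^{-1}(\vp(r))\,\|f\|_{\wLPp}$.

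There is essentially no hard step here; the only point that must be handled with a little care is that $M$ is uncentered, so one must verify that every competing ball $B'$ still has radius comparable to (or larger than) $r$ from below. This is done by the triangle inequality argument above, and once it is in place the weak substitute Lemma~\ref{lem:fint_B wLPp}, together with the doubling behavior of $\vp$ and $\Phi^{-1}$, closes the estimate exactly as in the strong case.
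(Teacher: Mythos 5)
Your proof is correct and follows essentially the same route as the paper: reduce to balls $B'\ni x$ of radius $s>r/2$ (which is exactly how the paper handles the support condition), apply Lemma~\ref{lem:fint_B wLPp} on $B'$, and pass from $\Phi^{-1}(\vp(s))$ to $\Phi^{-1}(\vp(r))$ using that $\Phi^{-1}(\vp(\cdot))$ is almost decreasing and doubling. Your write-up just makes the triangle-inequality step and the doubling step slightly more explicit than the paper does.
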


\begin{proof}
For any ball $B' \ni x$ whose radius is $s$, 
if $s\le r/2$, then $\int_{B'} |f(x)| dx = 0$,
and, if $s>r/2$, 
then, using Lemma~\ref{lem:fint_B wLPp}, 
we have
\begin{align*}
 \fint_{B'} |f(x)|\,dx
 &\ls
 \Phi^{-1}(\vp(s))\|f\|_{\wLPp}
 \\
 &\ls
 \Phi^{-1}(\vp(r))\|f\|_{\wLPp},
\end{align*}
since $r\mapsto\Phi^{-1}(\vp(r))$ is almost decreasing and satisfies the doubling condition.
\end{proof}

%%%===================================================================
%%%===================================================================
\section{Proofs}\label{sec:proof}
%%%===================================================================
%%%===================================================================

%%%-------------------------------------------------------------------
We first note that, to prove the theorems,
we may assume that $\Phi,\Psi\in\iPy$ instead of  $\Phi,\Psi\in\biPy$.
For example, if $\Phi$ and $\Psi$ satisfy \eqref{Ir A}
and $\Phi\approx\Phi_1$, $\Psi\approx\Psi_1$, then 
$\Phi_1$ and $\Psi_1$ also satisfy \eqref{Ir A} by the relation \eqref{approx equiv}.
Moreover,
\begin{equation*}
\LPp(\R^n)=L^{(\Phi_1,\vp)}(\R^n),
 \quad
 \LPsp(\R^n)=L^{(\Psi_1,\vp)}(\R^n),
\end{equation*}
and
\begin{equation*}
\wLPp(\R^n)=\wL^{(\Phi_1,\vp)}(\R^n),  \quad \wLPsp(\R^n)=\wL^{(\Psi_1,\vp)}(\R^n),
\end{equation*}
with equivalent quasi norms.
Similarly, by Remark~\ref{rem:D2 n2}~\ref{D2 n2 approx} 
we may assume that $\Phi\in\ntwo$ instead of $\Phi\in\bntwo$.
By Remark~\ref{rem:vp bijective}
we may also assume that $\vp$ is continuous and strictly decreasing.

%%%--------------------------------------------------

%%%===================================================================
\subsection{Proof of Theorem~\ref{thm:ww modular}}\label{ss:proof thm ww modular}
%%%===================================================================

To prove Theorem~\ref{thm:ww modular}
we use the following lemma.

%=================================================
\begin{lem}[{\cite[page~92]{Torchinsky1986}}]\label{lem:Mf t}
%=================================================
If $f\in L^1(\R^n)$, then
\begin{equation*}
 m(Mf,t)\le\frac{C}t \int_{|f|>t/2}|f(x)|\,dx,
 \quad t\in(0,\infty).
\end{equation*}
\end{lem}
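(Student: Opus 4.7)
The plan is to reduce the stated sharper weak-type $(1,1)$ inequality to the classical weak-type $(1,1)$ inequality for $M$ via a simple Calder\'on--Zygmund-style truncation of $f$ at the level $t/2$.

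First I would split the function into its large and small parts. For fixed $t>0$, set
\begin{equation*}
 f_1 = f\,\chi_{\{|f|>t/2\}}, \qquad f_2 = f\,\chi_{\{|f|\le t/2\}},
\end{equation*}
so that $f = f_1 + f_2$. Since $|f_2(x)|\le t/2$ for a.e.\ $x$, averaging over any ball gives $Mf_2(x)\le t/2$ for every $x\in\R^n$. Next I would exploit the sublinearity of $M$: $Mf(x)\le Mf_1(x)+Mf_2(x)$, which combined with the pointwise bound on $Mf_2$ yields the inclusion
\begin{equation*}
 \{x\in\R^n : Mf(x)>t\} \subseteq \{x\in\R^n : Mf_1(x) > t/2\}.
\end{equation*}

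Then I would invoke the classical weak-type $(1,1)$ inequality for the Hardy--Littlewood maximal operator (proved via the Vitali/Besicovitch covering lemma or a Calder\'on--Zygmund decomposition, and standard enough that we may take it as known), namely, there exists a constant $C'>0$ such that for every $g\in L^1(\R^n)$ and every $s>0$,
\begin{equation*}
 m(Mg,s)\le\frac{C'}{s}\int_{\R^n}|g(x)|\,dx.
\end{equation*}
Applying this to $g=f_1\in L^1(\R^n)$ at level $s=t/2$ gives
\begin{equation*}
 m(Mf_1,t/2)\le\frac{2C'}{t}\int_{\R^n}|f_1(x)|\,dx=\frac{2C'}{t}\int_{\{|f|>t/2\}}|f(x)|\,dx.
\end{equation*}
Combining this with the inclusion above yields the stated inequality with $C=2C'$.

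The argument has no real obstacle: the only nontrivial input is the classical weak-type $(1,1)$ bound for $M$, which is standard. The truncation trick (sometimes called the Calder\'on--Zygmund decomposition ``at level $t/2$'') is the precise device that upgrades the $L^1$ norm of $f$ to the localized integral over $\{|f|>t/2\}$, and it is essentially forced once one realizes that the small part contributes nothing to $\{Mf>t\}$.
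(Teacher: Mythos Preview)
Your argument is correct and is exactly the standard truncation proof of this inequality. The paper itself does not supply a proof of this lemma; it simply cites \cite[page~92]{Torchinsky1986}, where the same argument (split $f$ at level $t/2$, bound $Mf_2\le t/2$, apply the classical weak $(1,1)$ estimate to $f_1$) is given, so there is nothing further to compare.
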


%=================================================
\begin{proof}[\bf Proof of Theorem~\ref{thm:ww modular}]
%=================================================
We may assume that $\Phi\in\ntwo$ and $f\ge0$.
By Remark~\ref{rem:D2 n2}~\ref{Phi/tp inc} we can take $p>1$ such that
$\Phi(t)/t^p$ is almost increasing.
Then, 
\begin{equation*}
 \frac{\Phi(t)}{t^p}\le C_p\frac{\Phi(s)}{s^p}, \quad t<s.
\end{equation*}
Let 
\begin{equation*}
 N(f)=\sup_{t\in(0,\infty)}\Phi(t)m(2f, t)<\infty
 \quad\text{and}\quad G_t=\{2f>t\},
\end{equation*}
and let $g_t(x)=(f(x)-t/2)\chi_{G_t}(x)$ and $h_t(x)=f(x)-g_t(x)$.
Then $Mh_t(x)\le t/2$, which implies
\begin{equation*}
 m(Mf,t)
 \le
 m(Mg_t,t/2)+m(Mh_t,t/2)
 =
 m(Mg_t,t/2).
\end{equation*}
We consider three cases
$t\in(0,a(\Phi)]$, $t\in(a(\Phi),b(\Phi))$ and $t\in[b(\Phi),\infty)$.

Case 1:
If  $a(\Phi)>0$ and $t\in (0, a(\Phi)]$, 
then $\Phi(t) m(Mf,t) = 0$.

Case 2:
If $0\le a(\Phi) < b(\Phi) \le \infty$ and $t\in(a(\Phi),b(\Phi))$,
then $|G_t|=m(2f,t)<\infty$. 
Hence
\begin{align*}
 2\int_{\R^n}g_t(x)\,dx
 &=
 \int_{G_t}(2f(x)-t)\,dx
\\
 &=
 \int_0^t m(G_t,2f,s)\,ds + \int_t^{b(\Phi)} m(G_t,2f,s)\,ds - t|G_t|
\\
 &\le
 \int_t^{b(\Phi)} m(2f,s)\,ds
 =
 \frac1{\Phi(t)}\int_t^{b(\Phi)}\frac{\Phi(t)}{\Phi(s)}\Phi(s)m(2f,s)\,ds
\\
 &\le
 \frac{C_pN(f)}{\Phi(t)}\int_t^{b(\Phi)}\left(\frac{t}{s}\right)^{p}\,ds
 \le
 \frac{C_pN(f)t}{(p-1)\Phi(t)}<\infty.
\end{align*}
That is, $g_t\in L^1(\R^n)$ for $t\in(a(\Phi),b(\Phi))$.
By Lemma~\ref{lem:Mf t} we have
\begin{equation*}
 \Phi(t)m(Mf,t)
 \le
 \Phi(t)m(Mg_t,t/2)
 \le
 \frac{2C\Phi(t)}{t}\int_{\R^n}g_t(x)\,dx
 \le
 \frac{CC_p}{p-1}N(f).
\end{equation*}

Case 3:
If $b(\Phi)<\infty$ and $t\in[b(\Phi),\infty)$, then $|G_t|=m(2f,t)=0$ and $g_t=0$ a.e.
Hence $Mg_t=0$ and
\begin{equation*}
 \Phi(t)m(Mf,t)
 \le
 \Phi(t)m(Mg_t,t/2)
 =0.
\end{equation*}

In conclusion, for some $C'\ge1$ and all $t\in(0,\infty)$,
\begin{align*}
 \Phi(t) m(Mf,t) 
 &\le 
 C'N(f)
\\
 &\le 
 \sup_{s\in(0,\infty)}\Phi(C's)m(2f,s)
\\
 &=
 \sup_{s\in(0,\infty)}\Phi(s)m(2C'f,s).
\end{align*}
This shows the conclusion.
\end{proof}

%%%===================================================================
\subsection{Proof of Theorem~\ref{thm:M}}\label{ss:proof thm M}
%%%===================================================================

In this subsection we prove Theorem~\ref{thm:M}
by using Theorems~\ref{thm:modular} and \ref{thm:ww modular}.

\begin{proof}[\bf Proof of Theorem~\ref{thm:M}]
We may assume that $\Phi\in\iPy$.
Let $f \in \LPp(\R^n)$ and $||f||_{\LPp}=1$. 
To prove the norm inequality $\|Mf\|_{\wLPp}\le C_0$,
it is enough to prove that,
for any ball $B=B(a,r)$,
\begin{equation}\label{M weak}
 \|Mf\|_{\Phi,\vp,B,\weak}
 \le
 C_0.
\end{equation}

Let $f = f_1 + f_2$, $f_1 = f \chi_{2B}$.
Then we have 
\begin{equation}\label{int Phi(f1)}
 \int_{\R^n} \Phi(|f_1(x)|)\,dx
 \le
 \int_{2B} \Phi\left(\frac{|f(x)|}{\|f\|_{\Phi,\vp,2B}}\right)\,dx
 \le 
 |2B| \vp(2r)
 \le
 C_{n,\vp}|B|\vp (r),
\end{equation}
for some constant $C_{n,\vp}\ge 1$ by the doubling condition of $\vp$. 
Hence, \eqref{weak modular} 
yields
\begin{align*}
 \sup_{t\in(0,\infty)}\Phi(t)\ m\left(B,\frac{Mf_1}{C_{n,\vp}C_{\Phi}},t\right)
 &\le
 \frac1{C_{n,\vp}}
 \sup_{t\in(0,\infty)}\Phi(t)\ m\left(\frac{Mf_1}{C_{\Phi}},t\right) \\
 &\le
 \frac1{C_{n,\vp}}
 \int_{\R^n} \Phi(|f_1(x)|)\,dx
 \le
 |B|\vp (r).
\end{align*}
Next, 
since $\supp f_{2} \cap 2B = \emptyset$, 
by Lemma~\ref{lem:Nakai2008Studia9.4}
we have $Mf_2(x)\le C\Phi^{-1}(\vp(r))$ for $x\in B$.
Then
\begin{equation*}
 \sup_{t\in(0,\infty)}
 \Phi(t)\ m\left(B,\frac{Mf_{2}}{C},t\right) 
 \le
 \int_{B} \Phi \left( \frac{Mf_{2}(x)}{C} \right) dx
 \le
 \int_{B} \vp(r)dx = |B|\vp(r).
\end{equation*}
In the above we use \eqref{inverse ineq} for the second inequality.
Therefore,
\begin{equation*} 
 \|Mf\|_{\Phi,\vp,B,\weak}
 \le
 2\left(\|Mf_1\|_{\Phi,\vp,B,\weak}+\|Mf_2\|_{\Phi,\vp,B,\weak}\right)
 \le
 2(C_{n,\vp}C_{\Phi}+C),
\end{equation*}
which shows \eqref{M weak}.
If $\Phi\in\ntwo$, then by the same way we have 
the norm inequality $\|Mf\|_{\LPp}\le C_0$, 
using \eqref{modular} instead of \eqref{weak modular}.

Next, let $f \in \wLPp(\R^n)$ and $||f||_{\wLPp}=1$. 
To prove the norm inequality $\|Mf\|_{\wLPp}\le C_0$,
it is enough to prove \eqref{M weak}
for any ball $B=B(a,r)$.
Let $f = f_1 + f_2$, $f_1 = f \chi_{2B}$.
Then we have 
\begin{align*}
 \sup_{t\in(0,\infty)}
 \Phi(t)m(f_1,t)
&\le
 \sup_{t\in(0,\infty)}
 \Phi(t)m\left(2B,\frac{f}{\|f\|_{\Phi,\vp,2B,\weak}},t\right)
\\
&\le 
 |2B| \vp(2r)
 \le
 C_{n,\vp}|B|\vp (r),
\end{align*}
instead of \eqref{int Phi(f1)}.
Hence, \eqref{ww modular}
yields
\begin{align*}
 \sup_{t\in(0,\infty)}\Phi(t)\ m\left(B,\frac{Mf_1}{C_{n,\vp}C_{\Phi}},t\right)
 &\le
 \frac1{C_{n,\vp}}
 \sup_{t\in(0,\infty)}\Phi(t)\ m\left(\frac{Mf_1}{C_{\Phi}},t\right) \\
 &\le
 \frac1{C_{n,\vp}}
 \sup_{t\in(0,\infty)}
 \Phi(t)m(f_1,t)
 \le
 |B|\vp (r).
\end{align*}
We also have $Mf_2(x)\le C\Phi^{-1}(\vp(r))$ for $x\in B$ by Lemma~\ref{lem:Mf wLPp}.
Then
\begin{equation*}
 \sup_{t\in(0,\infty)}
 \Phi(t)\ m\left(B,\frac{Mf_{2}}{C},t\right) 
 \le
 \sup_{t\in(0,\infty)}
 \Phi(t)\ m\left(B,\Phi^{-1}(\vp(r)),t\right) 
 \le
 |B|\vp(r).
\end{equation*}
Therefore, we have the conclusion.
\end{proof}

%%%===================================================================
\subsection{Proof of Theorem~\ref{thm:Ir}}\label{ss:proof thm Ir}
%%%===================================================================

We may assume that $\Phi,\Psi\in\iPy$
and that $\vp$ is continuous and strictly decreasing. 
First we state a lemma.

%=================================================
\begin{lem}[{\cite[Proposition~1]{Deringoz-Guliyev-Nakai-Sawano-Shi2019Posi}}]\label{lem:trho}
%=================================================
Let $\rho,\tau:(0,\infty)\to(0,\infty)$.
Assume that $\rho$ satisfies \eqref{sup rho} 
and that $\tau$ satisfies the doubling condition \eqref{doubling}.
Define
\begin{equation}\label{eq:tilde rho}
 \trho(r)=\int_{K_1 r}^{K_2 r} \frac{\rho(t)}t\,dt,
 \quad r\in(0,\infty).
\end{equation}
Then there exists a positive constant $C$ such that, 
for all $r\in(0,\infty)$,
\begin{align}
\label{trho m}
  \sum_{j=-\infty}^{-1}\trho(2^j r) &\lesssim
 \int_{0}^{K_2 r}\frac{\rho(t)}{t}\,dt,\\
\label{trho p}
 \sum_{j=0}^{\infty}\trho(2^j r)\tau(2^{j}r)
&\lesssim \int_{K_1 r}^{\infty} \frac{\rho(t)\tau(t)}{t} \,dt.
\end{align}
\end{lem}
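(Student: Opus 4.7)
The plan is to exploit the fact that for fixed $r>0$ the family of intervals $J_j := [K_1 2^j r,\, K_2 2^j r]$, $j \in \Z$, has bounded overlap multiplicity. Indeed, $t \in J_j$ if and only if $\log_2(t/(K_2 r)) \le j \le \log_2(t/(K_1 r))$, so the number of integers $j$ with $t \in J_j$ is at most $\lfloor \log_2(K_2/K_1) \rfloor + 1 =: N$. This is the only combinatorial input needed.

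For \eqref{trho m}, I would swap summation and integration: writing $\chi_{J_j}$ for the characteristic function of $J_j$,
\begin{equation*}
  \sum_{j=-\infty}^{-1}\trho(2^j r)
  =
  \int_0^{\infty} \left(\sum_{j=-\infty}^{-1} \chi_{J_j}(t)\right) \frac{\rho(t)}{t}\,dt.
\end{equation*}
For $j\le -1$, $J_j \subset (0, K_2 r/2]$, so the inner sum is bounded by $N$ on $(0, K_2 r]$ and vanishes elsewhere, which gives the desired estimate.

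For \eqref{trho p}, I would use that $\tau$ is doubling to bring $\tau(2^j r)$ inside the integral. Since any $t \in J_j$ satisfies $K_1 \le t/(2^j r) \le K_2$, iterated application of the doubling condition yields $\tau(2^j r) \le C \tau(t)$ for $t \in J_j$, where $C$ depends only on the doubling constant of $\tau$ and on $K_1, K_2$. Hence
\begin{equation*}
  \tau(2^j r)\trho(2^j r)
  \le
  C\int_{J_j}\frac{\rho(t)\tau(t)}{t}\,dt,
\end{equation*}
and summing with the same bounded-overlap argument as above, together with $\bigcup_{j\ge 0} J_j \subset [K_1 r,\infty)$, yields
\begin{equation*}
  \sum_{j=0}^{\infty}\trho(2^j r)\tau(2^{j}r)
  \le
  CN \int_{K_1 r}^{\infty}\frac{\rho(t)\tau(t)}{t}\,dt.
\end{equation*}

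The argument is essentially bookkeeping; the only place where any care is needed is in verifying that the doubling estimate on $\tau$ transfers cleanly to the constants $K_1, K_2$, which amounts to a finite number of doubling steps determined by $\log_2(K_2/K_1)$ and $|\log_2 K_1|$. Condition \eqref{sup rho} on $\rho$ is not actually used in this lemma itself (it will be invoked elsewhere, e.g.\ to relate $\trho(r)$ back to $\sup_{r\le t\le 2r}\rho(t)$); the lemma is purely a discretization/overlap statement about the definition \eqref{eq:tilde rho} of $\trho$.
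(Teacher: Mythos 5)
Your proof is correct. The paper does not reprove this lemma (it only cites Proposition~1 of the reference), and your bounded-overlap argument is the standard one: the overlap multiplicity $N$ of the intervals $J_j=[K_12^jr,K_22^jr]$ depends only on $K_2/K_1$, the doubling condition on $\tau$ is applied a fixed number of times determined by $K_1$ and $K_2$, and your observation that \eqref{sup rho} is not actually used in the lemma itself (it only supplies the constants $K_1,K_2$ and is invoked elsewhere to compare $\rho$ with $\trho$) is accurate.
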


%=================================================
\begin{proof}[\bf Proof of Theorem~\ref{thm:Ir}~\ref{Ir 1}]
%=================================================
By the assumption \eqref{cG* dec} 
we may assume that $\vp$ is bijective from $(0,\infty)$ to itself.
If $\Phi^{-1}(0) > 0$, then,
from $\eqref{Ir A}$ it follows that 
\begin{equation*}
 0 < \int_0^{\infty} \frac{\rho(t)}{t}\,dt\;{\Phi}^{-1}(0) 
 \ls \Psi^{-1}(0), \quad
\end{equation*}
since $\dlim_{r\to\infty}\vp(r)=0$.
Let $f\in \LPp(\R^n)$ and $\|f\|_{\LPp}=1$,
and let $x\in\R^n$.
We may assume that 
\begin{equation*}
 0<\frac{Mf(x)}{C_0}<\infty 
 \quad\text{and}\quad
 0\le\Phi\left(\frac{Mf(x)}{C_0}\right)<\infty, 
\end{equation*}
otherwise there is nothing to prove. 

If $\Phi({Mf(x)}/{C_0})=0$,
then, by \eqref{inverse ineq} we have
\begin{equation*}
 \frac{Mf(x)}{C_0}
 \le \Phi^{-1}(0)
 =
 \sup\{u \ge 0: \Phi(u) = 0 \}.
\end{equation*}
Hence, using \eqref{trho m}, we have
\begin{align*}
 |\Ir f(x)|
 &\le
 \sum_{j=-\infty}^\infty
 \int_{2^{j}\le|x-y|<2^{j+1}} \frac{\rho(|x-y|)}{|x-y|^n}|f(x)|\,dx \\
 &\ls
 \sum_{j=-\infty}^\infty
 \frac{\trho(2^j)}{2^{j n}}
 \int_{|x-y|<2^{j+1}}|f(y)|\,dy
 \ls
 \int_0^\infty \frac{\rho(t)}{t}\,dt \ M f(x)\\
 &\le C_0
 \int_0^{\infty} \frac{\rho(t)}{t}\,dt \ {\Phi}^{-1}(0) 
 \ls
 \Psi^{-1}(0)
 \ls
 \Psi^{-1}\left(\Phi\left(\frac{Mf(x)}{C_0}\right)\right),
\end{align*}
which shows \eqref{Ir pointwise}.

If $\Phi({Mf(x)}/{C_0})>0$,
choose $r\in(0,\infty)$ such that
\begin{equation}\label{choose r}
 \vp(r) = \Phi\left(\frac{Mf(x)}{C_0}\right),
\end{equation}
and let
\begin{align*}
 J_1 
 &=
 \sum_{j=-\infty}^{-1}
 \frac{\trho(2^jr)}{(2^jr)^n}
 \int_{|x-y|<2^{j+1}r}|f(y)|\,dy, \\
 J_2
 &=
 \sum_{j=0}^\infty
 \frac{\trho(2^jr)}{(2^jr)^n}
 \int_{|x-y|<2^{j+1}r}|f(y)|\,dy. 
\end{align*}
Then
\begin{equation*}
 |\Ir f(x)|
 \ls
 J_1+J_2.
\end{equation*}
By \eqref{choose r} and \eqref{inverse ineq} 
we have
$Mf(x) \le C_0\Phi^{-1}(\vp(r))$.
Then, using \eqref{trho m}, we have
\begin{equation*}
 J_1
 \ls
 \int_0^{K_2r}\frac{\rho(t)}{t}\,dt \ M f(x)
 \ls
 \int_0^{K_2r}\frac{\rho(t)}{t}\,dt \ \Phi^{-1}(\vp(r)).
\end{equation*}
Next, by Lemma~\ref{lem:fint_B f}, $\|f\|_{\LPp}\le1$ and \eqref{trho p} we have
\begin{equation*}
 J_2
 \ls
 \sum_{j=0}^\infty
 \trho(2^jr)\Phi^{-1}(\vp(2^{j+1}r))
 \ls
 \int_{K_1r}^{\infty}\frac{\rho(t)\Phi^{-1}(\vp(t))}{t}\,dt.
\end{equation*}
Then
by \eqref{Ir A},
the doubling condition of $\Phi^{-1}(\vp(r))$ and $\Psi^{-1}(\vp(r))$,
and \eqref{choose r}, we have
\begin{align*}
 J_1+J_2
 &\ls
 \int_0^{K_2r}\frac{\rho(t)}{t}\,dt \ \Phi^{-1}(\vp(r))
 +\int_{K_1r}^{\infty}\frac{\rho(t)\Phi^{-1}(\vp(t))}{t}\,dt \\
 &\ls
 \frac{\Psi^{-1}(\vp(K_2r))}{\Phi^{-1}(\vp(K_2r))} \Phi^{-1}(\vp(r))
 +\Psi^{-1}(\vp(K_1r)) \\
 &\sim
 \Psi^{-1}(\vp(r))
 =
 \Psi^{-1}\left(\Phi\left(\frac{Mf(x)}{C_0}\right)\right).
\end{align*}
Combining this inequality with \eqref{inverse ineq}, we have \eqref{Ir pointwise}.

The proof of \eqref{Ir pointwise w} is almost the same as one of \eqref{Ir pointwise}.
The only difference is that
we use Lemma~\ref{lem:fint_B wLPp} instead of Lemma~\ref{lem:fint_B f} to estimate $J_2$.
\end{proof}

To prove Theorem~\ref{thm:Ir} (ii) we state three lemmas.

%=================================================
\begin{lem}[{\cite[Lemma~2.1]{Eridani-Gunawan-Nakai-Sawano2014MIA}}]\label{lem:Ir chi BR}
%=================================================
There exists a positive constant $C$ such that, 
for all $R>0$, 
\begin{equation*}
 \int_0^{R/2} \frac{\rho(t)}{t}\,dt \ \chi_{B(0,R/2)}(x)
 \le
 C\Ir \chi_{B(0,R)}(x),
 \quad x\in\R^n.
\end{equation*}
\end{lem}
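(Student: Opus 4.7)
The plan is to reduce the right-hand side to a radial integral by exploiting the positivity of the kernel and a translation. Since $\chi_{B(0,R/2)}(x)$ vanishes outside $B(0,R/2)$, we only need to establish the claimed inequality at points $x$ with $|x|<R/2$. Fix such an $x$. By the triangle inequality, $B(x,R/2)\subset B(0,R)$, so
\begin{equation*}
\Ir\chi_{B(0,R)}(x)
=\int_{B(0,R)}\frac{\rho(|x-y|)}{|x-y|^n}\,dy
\ge\int_{B(x,R/2)}\frac{\rho(|x-y|)}{|x-y|^n}\,dy.
\end{equation*}

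Next I would apply the change of variables $z=y-x$, which is a translation and thus has unit Jacobian, to rewrite this lower bound as
\begin{equation*}
\int_{B(0,R/2)}\frac{\rho(|z|)}{|z|^n}\,dz.
\end{equation*}
Observe that the integrand is radial and nonnegative (recall $\rho>0$), and that \eqref{int rho} guarantees integrability near the origin, so Fubini in polar coordinates applies. Writing $\omega_{n-1}$ for the surface measure of the unit sphere in $\R^n$, polar coordinates give
\begin{equation*}
\int_{B(0,R/2)}\frac{\rho(|z|)}{|z|^n}\,dz
=\omega_{n-1}\int_0^{R/2}\frac{\rho(t)}{t^n}\,t^{n-1}\,dt
=\omega_{n-1}\int_0^{R/2}\frac{\rho(t)}{t}\,dt.
\end{equation*}
Combining the two displays yields the desired inequality with $C=1/\omega_{n-1}$, uniformly in $R>0$ and in $x\in B(0,R/2)$.

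There is essentially no obstacle here: the argument is a one-step domain containment followed by a translation-plus-polar-coordinate computation, and the constant $C$ depends only on the dimension $n$. The only point to double-check is that the assumption \eqref{int rho} on $\rho$ is what legitimizes treating $\int_0^{R/2}\rho(t)/t\,dt$ (at least for $R\le 2$) as a finite quantity, although the inequality is meaningful and correct even in the limit case where both sides are infinite.
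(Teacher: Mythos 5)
Your argument is correct: the paper itself only cites this lemma from \cite[Lemma~2.1]{Eridani-Gunawan-Nakai-Sawano2014MIA} without reproducing a proof, and your chain (the inclusion $B(x,R/2)\subset B(0,R)$ for $|x|<R/2$, translation, then polar coordinates) is exactly the standard argument used there, yielding the constant $C=1/\omega_{n-1}$. No gaps; the remark about both sides possibly being infinite is a fair way to dispose of the integrability caveat.
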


The following lemma is an extension of \cite[Lemma~2.4]{Eridani-Gunawan-Nakai-Sawano2014MIA}
and gives a typical element in $\LPp(\R^n)$.

%=============================
\begin{lem}\label{lem:g}
%=============================
For $\Phi\in\iPy$ and $\vp\in\cGdec$, 
let $g(x)=\Phi^{-1}(\vp(|x|))$.
If $\vp$ satisfies \eqref{int vp tn-1},
then $g\in\LPp (\R^n)$.
\end{lem}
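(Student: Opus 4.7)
The plan is to establish $\|g\|_{\Phi,\vp,B}\le C$ uniformly over balls $B=B(a,r)$, which will immediately give $g\in\LPp(\R^n)$. The first step is to exploit convexity: since $\Phi$ is a Young function with $\Phi(0)=0$, one has $\Phi(t/\lambda)\le \Phi(t)/\lambda$ for every $\lambda\ge 1$ and every $t$ with $\Phi(t)<\infty$. Combined with the O'Neil-type inverse inequality \eqref{inverse ineq} applied at $u=\vp(|x|)$ (which is finite, since $\vp:(0,\infty)\to(0,\infty)$), this gives
\[
\Phi\!\left(\frac{g(x)}{\lambda}\right)\le\frac{\Phi(\Phi^{-1}(\vp(|x|)))}{\lambda}\le\frac{\vp(|x|)}{\lambda}\qquad(\lambda\ge 1).
\]
Thus the lemma reduces to finding a constant $C$, independent of $B=B(a,r)$, such that
\[
\fint_B \vp(|x|)\,dx\le C\vp(r);
\]
then choosing $\lambda=\max(C,1)$ in the definition of $\|g\|_{\Phi,\vp,B}$ yields the desired uniform bound.

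To prove the averaged estimate I would split into two cases according to the position of $B$ relative to the origin. When $|a|\le 2r$ we have $B\subset B(0,3r)$, so polar coordinates, the hypothesis \eqref{int vp tn-1} applied at radius $3r$, and the doubling property of $\vp$ (which follows from $\vp\in\cGdec$) give
\[
\int_B \vp(|x|)\,dx\le c_n\int_0^{3r}\vp(t)t^{n-1}\,dt\ls\vp(3r)(3r)^n\sim\vp(r)r^n,
\]
and dividing by $|B|\sim r^n$ produces the bound. When $|a|>2r$, every $x\in B$ satisfies $|a|/2<|x|<3|a|/2$; hence the doubling property of $\vp$ gives $\vp(|x|)\sim\vp(|a|)$, and because $r<|a|$ the almost decreasing property of $\vp\in\cGdec$ supplies $\vp(|a|)\ls\vp(r)$, so that $\fint_B\vp(|x|)\,dx\ls\vp(r)$.

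The only real obstacle is the near-origin case $|a|\le 2r$, where $\vp(|x|)$ may blow up as $|x|\to 0$; this is precisely the situation that the hypothesis \eqref{int vp tn-1} is designed to control, converting the pointwise singularity of $\vp(|x|)$ into an averaged bound of size $\vp(r)r^n$. The far-from-origin estimate is a routine consequence of the doubling and almost decreasing features of the class $\cGdec$, and the final step assembling $\|g\|_{\LPp}<\infty$ is immediate once the uniform modular bound is in hand.
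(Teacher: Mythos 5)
Your proof is correct, and it finishes exactly as the paper does: both arguments reduce the lemma, via convexity of $\Phi$ together with $\Phi(\Phi^{-1}(u))\le u$ from \eqref{inverse ineq}, to the single averaged estimate $\fint_{B(a,r)}\vp(|x|)\,dx\ls\vp(r)$ uniformly in $B(a,r)$. Where you differ is in how that estimate is obtained. The paper first invokes Remark~\ref{rem:vp bijective} to replace $\vp$ by an equivalent genuinely decreasing function, so that $x\mapsto\vp(|x|)$ is radial decreasing; it then uses the rearrangement-type observation that the average of a radial decreasing function over $B(a,r)$ is maximized at $a=0$, which reduces everything in one line to $\frac1{r^n}\int_0^r\vp(t)t^{n-1}\,dt\ls\vp(r)$, i.e.\ to \eqref{int vp tn-1}. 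You instead split according to the position of the ball: for $|a|\le 2r$ you enlarge to $B(0,3r)$ and apply \eqref{int vp tn-1} together with doubling, and for $|a|>2r$ you note $|x|\sim|a|$ on $B$ and use doubling plus the almost decreasing property of $\vp\in\cGdec$. Both routes are sound; yours is slightly longer but works directly with the raw class $\cGdec$ without the preliminary normalization of $\vp$, while the paper's rearrangement argument is shorter once that normalization is granted. Your case analysis also makes explicit that the hypothesis \eqref{int vp tn-1} is only genuinely needed for balls near the origin, a point the paper's proof leaves implicit.
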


\begin{proof}
By Remark~\ref{rem:vp bijective} we may assume that 
$\vp$ is decreasing. 
In this case $x \mapsto \vp(|x|)$ is radial decreasing, 
so that, for any ball $B(a,r)$,
\begin{equation*}
 \fint_{B(a,r)} \vp(|x|)\,dx
 \le
 \fint_{B(0,r)} \vp(|x|)\,dx
 \sim
 \frac1{r^n}\int_0^r \vp(t)t^{n-1}\,dt
 \ls
 \vp(r).
\end{equation*}
In the above we used \eqref{int vp tn-1} for the last inequality.
Then, taking a suitable constant $C_g\ge1$,
and using the convexity of $\Phi$ and \eqref{inverse ineq}, we have
\begin{align*}
 \frac1{\vp(r)}\fint_{B(a,r)} \Phi \left( \frac{\Phi^{-1}(\vp(|x|))}{C_g} \right) dx
 &\le
 \frac1{C_g\vp(r)}\fint_{B(a,r)} \Phi(\Phi^{-1}(\vp(|x|))) \,dx \\
 &\le
 \frac1{C_g\vp(r)}\fint_{B(a,r)} \vp(|x|)) \,dx
 \le1.
\end{align*}
This shows $g\in\LPp(\R^n)$ with
$\|g\|_{\LPp}\le C_g$.
\end{proof}

%=============================
\begin{lem}\label{lem:gR}
%=============================
Let $\Phi\in\iPy$ and $\vp\in\cGdec$.
%Let $\vp\in\cGdec$.
For $R>0$, let 
$$g_R(x)=\Phi^{-1}(\vp(|x|))\chi_{\R^n\setminus B(0,R)}(x).$$
Then there exists a positive constant $C$ such that, for all $R>0$,
\begin{equation*}
 \int_{2R}^{\infty}\frac{\rho(t)\Phi^{-1}(\vp(t))}{t}\,dt \ \chi_{B(0,R)}(x)
 \le 
 C\Ir g_R(x).
\end{equation*}
\end{lem}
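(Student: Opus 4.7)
Fix $R>0$ and $x\in B(0,R)$. The plan is to decompose the defining integral of $\Ir g_R(x)$ into dyadic annuli centered at $x$, and to observe that on each annulus both $|y|$ and $|x-y|$ are comparable, so the weight $\Phi^{-1}(\vp(|y|))$ may be pulled out and the remaining piece computed by polar coordinates.

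For $k\ge 1$ set $A_k=\{y\in\R^n:2^kR\le|x-y|<2^{k+1}R\}$. If $y\in A_k$ and $|x|<R$, then the triangle inequality gives
\begin{equation*}
  (2^k-1)R \le |y| \le (2^{k+1}+1)R,
\end{equation*}
so in particular $|y|\ge R$ (hence $\chi_{\R^n\setminus B(0,R)}(y)=1$) and $|y|\sim 2^kR\sim|x-y|$. Since $\vp\in\cGdec$ satisfies the doubling condition and $\Phi^{-1}$ is concave and therefore also doubling (Remark~\ref{rem:D2 n2}~\ref{Phi-1}), we obtain $\Phi^{-1}(\vp(|y|))\sim\Phi^{-1}(\vp(2^kR))$ uniformly on $A_k$.

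Since $A_k$ is an annulus centered at $x$, polar coordinates centered at $x$ give
\begin{equation*}
  \int_{A_k}\frac{\rho(|x-y|)}{|x-y|^n}\,dy
  = c_n\int_{2^kR}^{2^{k+1}R}\frac{\rho(t)}{t}\,dt.
\end{equation*}
Combining these observations yields, for every $x\in B(0,R)$,
\begin{align*}
  \Ir g_R(x)
  &\ge \sum_{k=1}^{\infty}\int_{A_k}\frac{\rho(|x-y|)}{|x-y|^n}\Phi^{-1}(\vp(|y|))\,dy \\
  &\gs \sum_{k=1}^{\infty}\Phi^{-1}(\vp(2^kR))\int_{2^kR}^{2^{k+1}R}\frac{\rho(t)}{t}\,dt \\
  &\sim \sum_{k=1}^{\infty}\int_{2^kR}^{2^{k+1}R}\frac{\rho(t)\Phi^{-1}(\vp(t))}{t}\,dt
  = \int_{2R}^{\infty}\frac{\rho(t)\Phi^{-1}(\vp(t))}{t}\,dt,
\end{align*}
which is the desired inequality after multiplying by $\chi_{B(0,R)}(x)$.

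The only potentially delicate point is that the innermost annulus $k=1$ just grazes the boundary of $B(0,R)$: one must check that $|y|\ge R$ on $A_1$, which follows from $|x|<R$ and $|x-y|\ge 2R$. Everything else is book-keeping via the doubling properties of $\vp$ and $\Phi^{-1}$ and a change to polar coordinates; neither the convergence hypothesis \eqref{int rho} nor condition \eqref{sup rho} is needed here.
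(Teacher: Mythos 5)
Your proof is correct and follows essentially the same route as the paper's: restrict the integral to $|x-y|\ge 2R$, observe that there $|y|\sim|x-y|$ and $|y|\ge R$ so the cut-off is inactive and the weight is comparable, and conclude using the doubling of $\vp$ and $\Phi^{-1}$ together with polar coordinates. The paper carries this out in one continuous step via the change of variables $y\mapsto x-y$ instead of dyadic annuli, but the content is identical.
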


\begin{proof}
Let $x\in B(0,R)$.
Then $B(0,R)\subset B(x,2R)$ 
and 
$|x-y| \sim |y|$ for all $y\notin B(0,2R)$.
Hence
\begin{align*}
 \Ir g_R(x)
 &=
 \int_{\R^n \setminus B(0,R)} \frac{\rho(|x-y|)\Phi^{-1}(\vp(|y|))}{|x-y|^n}\,dy \\
 &\ge
 \int_{\R^n \setminus B(x,2R)} \frac{\rho(|x-y|)\Phi^{-1}(\vp(|y|))}{|x-y|^n}\,dy \\
 &=
 \int_{\R^n \setminus B(0,2R)} \frac{\rho(|y|)\Phi^{-1}(\vp(|x-y|))}{|y|^n}\,dy \\
 &\sim
 \int_{2R}^{\infty}\frac{\rho(t)\Phi^{-1}(\vp(t))}{t}\,dt.
\end{align*}
This shows the conclusion.
\end{proof}

%=================================================
\begin{proof}[\bf Proof of Theorem~\ref{thm:Ir}~\ref{Ir 2}]
%=================================================
Firstly, by Lemma~\ref{lem:Ir chi BR} and the boundedness of $\Ir$, we have
\begin{align*}
 &\int_0^r \frac{\rho(t)}{t}\,dt \|\chi_{B(0,r)}\|_{\wLPsp}
 \ls
 \| \Ir\chi_{B(0,2r)}\|_{\wLPsp}
 \ls
 \| \chi_{B(0,2r)}\|_{\LPp}.
\end{align*}
By Lemma~\ref{lem:chi norm 2} 
and the doubling condition of $\Phi^{-1}(\vp(r))$ 
we have
\begin{equation*}
 \int_0^r\frac{\rho(t)}{t}\,dt\;{\Phi}^{-1}(\vp(r)) 
 \ls
 \Psi^{-1}(\vp(r)).
\end{equation*}
Secondly, under the assumption \eqref{int vp tn-1},
let $g$ and $g_R$ be functions as in Lemmas~\ref{lem:g} and \ref{lem:gR}, respectively.
Then 
by the boundedness of $\Ir$ we obtain
\begin{equation*}
 \int_{r}^{\infty} \frac{\rho(t)\Phi^{-1}(\vp(t))}{t}dt \|\chi_{B(0,r/2)}\|_{\wLPsp}
 \ls
 \|\Ir g_{r/2}\|_{\wLPsp}
 \ls
 \|g_{r/2}\|_{\LPp}
 \ls
 \|g\|_{\LPp}.
\end{equation*}
By Lemma~\ref{lem:chi norm 2} and the doubling condition of $\Psi^{-1}(\vp(r))$ we have
\begin{equation*}
 \int_r^{\infty} \frac{\rho(t)\Phi^{-1}(\vp(t))}{t}dt
 \ls
 \Psi^{-1}(\vp(r)).
\end{equation*}
Thus, we obtain the conclusion.
\end{proof}

%=================================================
\begin{proof}[\bf Proof of Theorem~\ref{thm:Ir}~\ref{Ir 3}]
%=================================================
By Lemma~\ref{lem:chi norm 2}, Corollary~\ref{cor:chi norm} and Remark~\ref{rem:wLPp}
we have
\begin{equation*}
 \|\chi_{B(0,r)}\|_{\wLPsp}
 \sim
 \|\chi_{B(0,r)}\|_{L^{(1,\Psi^{-1}(\vp))}}
 \sim
 \frac1{\Psi^{-1}(\vp(r))}.
\end{equation*}
Then we can replace $\|\chi_{B(0,r)}\|_{\wLPsp}$ 
by $\|\chi_{B(0,r)}\|_{L^{(1,\Psi^{-1}(\vp))}}$
in the proof of \ref{Ir 2}.
\end{proof}

%%%===================================================================
\subsection{Proof of Theorem~\ref{thm:Mr}}\label{ss:proof thm Mr}
%%%===================================================================

We may assume that $\Phi,\Psi\in\iPy$
and that $\vp$ is continuous and strictly decreasing. 

%=================================================
\begin{proof}[\bf Proof of Theorem~\ref{thm:Mr}~\ref{Mr 1}]
%=================================================
The pointwise estimate \eqref{Mr pointwise} 
was already proven in \cite[Theorem~5.1]{Shi-Arai-Nakai2020Banach}.
The pointwise estimate \eqref{Mr pointwise w} can be proven 
by almost the same way as \eqref{Mr pointwise}.
To prove \eqref{Mr pointwise w} we use
\begin{equation*}
 \fint_{B}|f(x)|\,dx
 \le
 C\Phi^{-1}(\vp(r)) \|f\|_{\wLPp}
\end{equation*}
by Lemma~\ref{lem:fint_B wLPp}
instead of 
\begin{equation*}
 \fint_{B}|f(x)|\,dx
 \le
 C\Phi^{-1}(\vp(r)) \|f\|_{\LPp}.
\end{equation*}
Other parts are the same as the proof of \eqref{Mr pointwise}.
\end{proof}

For the proof of Theorem~\ref{thm:Mr} {\rm (ii)}
we use the following lemma.

%=================================================
\begin{lem}[{\cite[Lemma~5.1]{Shi-Arai-Nakai2019Taiwan}}]\label{lem:Mr chi}
%=================================================
Let $\rho:(0,\infty)\to(0,\infty)$.
Then, for all $r\in(0,\infty)$,
\begin{equation}\label{sup rho < Mr}
 \left(\sup_{0<t\le r}\rho(t)\right)\chi_{B(0,r)}(x)
 \le (\Mr\chi_{B(0,r)})(x),
 \quad x\in\R^n.
\end{equation}
\end{lem}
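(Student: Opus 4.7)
The plan is to prove this as a pointwise inequality. For $x \notin B(0, r)$ the left-hand side is zero and there is nothing to check, so I may assume $x \in B(0, r)$, i.e., $|x| < r$. It then suffices to show $\rho(t) \le \Mr \chi_{B(0, r)}(x)$ for every fixed $t \in (0, r]$ and then pass to the supremum. The key step is to exhibit, for each such $t$, a ball $B(a, t)$ containing $x$ and contained in $B(0, r)$: once this is done, $\chi_{B(0, r)} \equiv 1$ on $B(a, t)$, and the supremum defining $\Mr \chi_{B(0, r)}(x)$ majorizes
$$\rho(t) \fint_{B(a, t)} \chi_{B(0, r)}(y)\,dy = \rho(t).$$

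I would construct $a$ by cases. If $|x| + t \le r$, I take $a = x$, so that $x \in B(x, t) \subset B(0, r)$ trivially. If instead $|x| + t > r$, then necessarily $|x| > r - t \ge 0$, so $x \neq 0$, and I slide $a$ along the segment from $0$ to $x$ by setting $a = \frac{r - t}{|x|}\, x$, giving $|a| = r - t$. For any $y \in B(a, t)$, the triangle inequality yields $|y| \le |y - a| + |a| < t + (r - t) = r$, so $B(a, t) \subset B(0, r)$. Moreover, $|x - a| = |x| - (r - t) = |x| + t - r$, which is strictly less than $t$ precisely because $|x| < r$, so $x \in B(a, t)$ as required.

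Combining both cases, $\rho(t) \le \Mr \chi_{B(0, r)}(x)$ for every $t \in (0, r]$, and taking the supremum over $t$ completes the proof. I do not foresee any substantive obstacle; the only mildly delicate point is being able to fit a ball of radius as large as $r$ around a point $x$ close to the boundary of $B(0, r)$ while still keeping the ball inside $B(0, r)$, and this is exactly what the translation $a = (r - t)\, x / |x|$ accomplishes.
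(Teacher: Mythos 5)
Your proof is correct. The paper itself does not reprove this lemma --- it simply cites \cite[Lemma~5.1]{Shi-Arai-Nakai2019Taiwan} --- and your argument supplies exactly the standard verification one would expect there: for $x\in B(0,r)$ and each $t\in(0,r]$, exhibit a ball $B(a,t)$ with $x\in B(a,t)\subset B(0,r)$ so that the average of $\chi_{B(0,r)}$ over it equals $1$, then take the supremum over $t$. Your two-case construction of the center $a$ (with $a=(r-t)x/|x|$ when $x$ is too close to the boundary) is carried out correctly, including the edge case $t=r$, where it forces $a=0$.
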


%=================================================
\begin{proof}[\bf Proof of Theorem~\ref{thm:Mr}~\ref{Mr 2}]
%=================================================
By Lemma~\ref{lem:Mr chi} and the boundedness of $\Mr$ 
from $\LPp(\R^n)$ to $\wLPsp(\R^n)$
we have
\begin{equation*}
 \left(\sup_{0<t\le r}\rho(t)\right)\|\chi_{B(0,r)}\|_{\wLPsp}
 \le
 \|\Mr\chi_{B(0,r)}\|_{\wLPsp}
 \ls
 \|\chi_{B(0,r)}\|_{\LPp}.
\end{equation*}
Then, by Lemma~\ref{lem:chi norm 2}
we have the conclusion.
\end{proof}

%=================================================
\begin{proof}[\bf Proof of Theorem~\ref{thm:Mr}~\ref{Mr 3}]
%=================================================
The same as Proof of Theorem~\ref{thm:Ir}~\ref{Ir 3}.
\end{proof}

%%%%===================================================================
%%%%===================================================================
\section*{Acknowledgement}
%%%%===================================================================
%%%%===================================================================
%The authors would like to thank the referees for their careful reading 
%and many useful comments, by which we could add the third parts in
%Theorems~\ref{thm:Ir} and \ref{thm:Mr}, respectively.
The second author was supported by Grant-in-Aid for Scientific Research (B), 
No.~15H03621, Japan Society for the Promotion of Science.

%%%===================================================================
%%%===================================================================
%%%===================================================================

\bigskip

\begin{flushright}
\begin{minipage}{100mm}
\noindent
Ryota Kawasumi \\
Minohara 1-6-3 (B-2), Misawa, Aomori 033-0033, Japan \\
rykawasumi@gmail.com
\\[3ex]
\noindent
Eiichi Nakai \\
Department of Mathematics \\
Ibaraki University \\
Mito, Ibaraki 310-8512, Japan \\
eiichi.nakai.math@vc.ibaraki.ac.jp  
\\[3ex]
\noindent
Minglei Shi \\
Jiangxi University of Engineering \\
Xinyu, Jiangxi 338000, China \\
shimingleiyy@163.com
\end{minipage}
\end{flushright}

\end{document}